\theoremstyle{plain}
\newtheorem{thm}{Theorem}[section]
\newtheorem{lemma}[thm]{Lemma}
\newtheorem{remark}[thm]{Remark}
\numberwithin{equation}{section}
\theoremstyle{remark}
\def\Xint#1{\mathchoice
  {\XXint\displaystyle\textstyle{#1}}%
  {\XXint\textstyle\scriptstyle{#1}}%
  {\XXint\scriptstyle\scriptscriptstyle{#1}}%
  {\XXint\scriptscriptstyle\scriptscriptstyle{#1}}%
  \!\int}
\def\XXint#1#2#3{{\setbox0=\hbox{$#1{#2#3}{\int}$}
  \vcenter{\hbox{$#2#3$}}\kern-.5\wd0}}
\def\dashint{\Xint-}
\begin{document}
\allowdisplaybreaks
\pagestyle{myheadings}\markboth{$~$ \hfill {\rm Q. Xu,} \hfill $~$} {$~$ \hfill {\rm  } \hfill$~$}
\author{Qiang Xu
\thanks{Email: xuqiang@math.pku.edu.cn.},~
Weiren Zhao
\thanks{Email: zjzwr@math.pku.edu.cn},~
Shulin Zhou
\thanks{Email: szhou@math.pku.edu.cn}
\\
School of Mathematical Sciences, Peking University, \\
Beijing, 100871, PR China. \\
}


\title{\textbf{Commutator Estimates for the Dirichlet-to-Neumann Map of Stokes Systems in Lipschitz Domains} }
\maketitle
\begin{abstract}
In the paper, we establish commutator estimates for the Dirichlet-to-Neumann map of Stokes systems in
Lipschitz domains. The approach is based on Dahlberg's bilinear estimates, and the results may be
regarded as an extension of \cite{BD4,Shen} to Stokes systems.
\\
\textbf{Key words.} Commutator estimate; Dirichlet-to-Neumann map; Stokes system; Lipschitz domain
\end{abstract}

\section{Introduction and main results}

Let $\Omega\subset\mathbb{R}^d$ be a Lipschitz domain with $d\geq 3$.
It is well known that for any $f\in H^{1/2}(\partial\Omega;\mathbb{R}^d)$ with
the compatibility condition $\int_{\partial\Omega} n\cdot f dS = 0$, the Dirichlet problem
for the Stokes system
\begin{equation}\label{pde:1.1}
\left\{\begin{aligned}
\Delta u - \nabla q &= 0&\quad & \text{in} &~~\Omega,\\
\text{div} (u) &= 0 &\quad & \text{in} &~~\Omega,\\
          u &= f &\quad & \text{on} &~\partial\Omega
\end{aligned}\right.
\end{equation}
has a unique velocity $u$ in $V_N=\big\{v\in H^1(\Omega;\mathbb{R}^d):\text{div}(v) = 0\big\}$, and a unique pressure $q$ up to constants in $L^2(\Omega)$.
To make the following definition well-defined,
we may assume $\int_\Omega q(x) dx = 0$. The
Dirichlet-to-Neumann map $\Lambda: H^{1/2}(\partial\Omega;\mathbb{R}^d)
\to H^{-1/2}(\partial\Omega;\mathbb{R}^d)$ is defined by
\begin{equation}\label{f:1.1}
\big(\Lambda(f)\big)^\alpha = \frac{\partial u^\alpha}{\partial n} - n_\alpha q
\end{equation}
in a weak sense,
where $n=(n_1,\cdots,n_d)$ is the outward unit normal to $\partial\Omega$.
The right-hand side of $\eqref{f:1.1}$ denotes the conormal derivative of $u$ on $\partial\Omega$
(see for example \cite{EBFCEKGCV,OAL}). Furthermore, from the results in \cite{EBFCEKGCV},
one may show that $\|\Lambda(f)\|_{L^2(\partial\Omega)}\leq C\|f\|_{H^1(\partial\Omega)}$.

In the paper, we will study the $L^2$-theory of the commutator estimates for the Dirichlet-to-Neumann map $\eqref{f:1.1}$,
and the main results will be shown in the following.
\begin{thm}\label{thm:1.1}
Let $\Omega$ be a bounded Lipschitz domain, and
$f\in L^2(\partial\Omega;\mathbb{R}^d)$ satisfy the compatibility condition $\int_{\partial\Omega} n\cdot f dS = 0$. Suppose $(u,q)$ is the solution of $\eqref{pde:1.1}$ with boundary data $f$.
Then for any $\eta\in C^{0,1}(\partial\Omega)$ satisfying $\int_{\partial\Omega}n\cdot \eta f dS =0$,
we have
\begin{equation}\label{pri:1.1}
\big\|\Lambda(\eta f) - \eta\Lambda(f)\big\|_{L^2(\partial\Omega)}
\leq C\|\eta\|_{C^{0,1}(\partial\Omega)}\|f\|_{L^2(\partial\Omega)},
\end{equation}
where $C$ depends on $d$ and $\Omega$.
Particularly, in the case of $d=3$, the estimate
\begin{equation}\label{pri:1.2}
\big\|\Lambda(\eta f) - \eta\Lambda(f)\big\|_{L^2(\partial\Omega)}
\leq C\|\eta\|_{H^{1}(\partial\Omega)}\|f\|_{L^\infty(\partial\Omega)},
\end{equation}
also holds, where $C$ depends only on $\Omega$.
\end{thm}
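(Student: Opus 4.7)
My plan is to prove \eqref{pri:1.1} by a duality argument combined with bilinear estimates of Dahlberg type for the Stokes system. The inequality \eqref{pri:1.1} is equivalent, by duality, to the trilinear bound
\[
\Bigl|\int_{\partial\Omega}\bigl(\Lambda(\eta f)-\eta\Lambda(f)\bigr)\cdot g\,dS\Bigr|
\leq C\,\|\eta\|_{C^{0,1}(\partial\Omega)}\|f\|_{L^2(\partial\Omega)}\|g\|_{L^2(\partial\Omega)}
\]
for a dense class of test fields $g\in L^2(\partial\Omega;\mathbb{R}^d)$ (normalized so that the conormal pairing, which is well defined only modulo $\mathbb{R}\,n$, is unambiguous). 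Thus everything reduces to controlling this trilinear form.

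Let $(u,q)$, $(v,\pi)$, and $(U,Q)$ be the Stokes solutions with Dirichlet data $f$, $g$, and $\eta f$, and fix a Lipschitz extension $\tilde\eta\in C^{0,1}(\overline\Omega)$ of $\eta$ with $\|\tilde\eta\|_{C^{0,1}(\overline\Omega)}\leq C\|\eta\|_{C^{0,1}(\partial\Omega)}$. The Green identity for the Stokes conormal derivative,
\[
\int_{\partial\Omega}\Lambda(h)\cdot\varphi\,dS=\int_\Omega\bigl(\nabla w:\nabla\Phi-p\,\mathrm{div}\,\Phi\bigr)dx,
\]
valid for the Stokes solution $(w,p)$ with data $h$ and any $H^1$-extension $\Phi$ of $\varphi$, is applied once with $\Phi=v$ (for $\Lambda(\eta f)$) and once with $\Phi=\tilde\eta v$ (for $\eta\Lambda(f)$, using $(\tilde\eta v)|_{\partial\Omega}=\eta g$). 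Subtracting and exploiting $\mathrm{div}\,v=0$ rewrites the commutator as a finite sum of volume integrals whose integrands pair $\nabla\tilde\eta$ or $\tilde\eta$ against members of $\{\nabla u,\nabla v,u,v,q,\pi\}$. An equivalent and cleaner organization is to introduce $W=U-\tilde\eta u$, which vanishes on $\partial\Omega$ and satisfies a Stokes-type system with inhomogeneous divergence $-u\cdot\nabla\tilde\eta$ and source terms of the form $\nabla\tilde\eta\cdot\nabla u$ and $q\,\nabla\tilde\eta$; testing the associated energy identity against $v$ isolates the commutator.

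The volume integrals so obtained are precisely of the form controlled by Dahlberg-type bilinear estimates for Stokes pairs (set up in earlier sections of the paper), which combined with the $L^2$-nontangential maximal function estimates for Stokes systems in Lipschitz domains from \cite{EBFCEKGCV} close the proof of \eqref{pri:1.1}. For the three-dimensional refinement \eqref{pri:1.2} the same scheme is applied with the roles of $\eta$ and $f$ redistributed: $\|f\|_{L^\infty(\partial\Omega)}$ is carried on the $u$-factor by maximum-principle-type estimates for Stokes with bounded data, while $\|\nabla\eta\|_{L^2(\partial\Omega)}$ is carried on the $\nabla\tilde\eta$-factor, the pairing being closed by the favorable Sobolev embeddings available on the two-dimensional boundary $\partial\Omega$ when $d=3$.

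The main obstacle, and the point at which the argument must genuinely depart from the Laplacian treatments of \cite{BD4,Shen}, is the divergence-free constraint on $u$: since $\tilde\eta u$ is not solenoidal, the correction $W=U-\tilde\eta u$ carries a nontrivial pressure which generates extra terms in the commutator that have no analogue for harmonic functions. Balancing these pressure contributions against the $q\,\nabla\tilde\eta$ terms produced by the momentum equation, while keeping every resulting bilinear estimate linear in $\|\eta\|_{C^{0,1}}$ (respectively $\|\eta\|_{H^1}$ when $d=3$) and of Dahlberg type, is the central technical hurdle and dictates the specific decomposition sketched above.
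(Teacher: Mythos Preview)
Your plan for \eqref{pri:1.1} is essentially the paper's: duality against a Stokes test pair $(h,\pi)$ (your $(v,\pi)$), the Green-formula identity rewriting $\int_{\partial\Omega}[\Lambda,\eta]f\cdot h\,dS$ as four volume integrals (the paper's identity \eqref{pri:2.1}), and the Dahlberg-type bilinear estimate \eqref{pri:4.1} together with the pressure lemmas \eqref{pri:4.2}, \eqref{pri:4.3} to close. One point you should make explicit: the extension $\tilde\eta$ must be the one from Lemma~\ref{lemma:2.1}, chosen so that $|\nabla^2\tilde\eta|^2\delta(x)\,dx$ is a Carleson measure, not an arbitrary Lipschitz extension; this Carleson property is what absorbs the second-derivative terms that integration by parts produces inside the bilinear estimates.

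For \eqref{pri:1.2} your sketch names the right ingredients but proposes the wrong mechanism. ``Favorable Sobolev embeddings on the two-dimensional boundary'' play no role in the paper's argument. What actually happens is: (i) one replaces the Lipschitz extension of $\eta$ by its \emph{harmonic} extension, so that both $\|(\nabla\eta)^*\|_{L^2(\partial\Omega)}$ and the square function $\int_\Omega|\nabla^2\eta|^2\delta\,dx$ are controlled by $\|\eta\|_{H^1(\partial\Omega)}$ (see \eqref{f:4.15}); and (ii) one uses that when $u\in L^\infty(\Omega)$, both $|\nabla u|^2\delta\,dx$ and $|q|^2\delta\,dx$ are Carleson measures with norm $\leq C\|u\|_{L^\infty}^2$ (Lemma~\ref{lemma:2.2}). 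The bilinear pairings then close via Carleson's embedding, pairing $(\nabla\eta)^*$ or $(h)^*$ against these Carleson measures; see \eqref{f:4.21}. The only place $d=3$ enters is to supply the Agmon--Miranda maximum principle $\|u\|_{L^\infty(\Omega)}\leq C\|f\|_{L^\infty(\partial\Omega)}$ from \cite{Shen3}; no boundary Sobolev inequality is used. Without the Carleson-measure step (ii) your redistribution of norms would not close, since one cannot otherwise pair $\nabla\eta\in L^2$ against $\nabla u$ or $q$ while keeping the estimate linear in $\|u\|_{L^\infty}$.
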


\begin{remark}
\emph{In the proof of $\eqref{pri:1.2}$, the assumption that $d=3$ merely guarantees
the $L^\infty$-estimate $\|u\|_{L^\infty(\Omega)}\leq C\|f\|_{L^\infty(\partial\Omega)}$ is valid,
which is well known as the Agmon-Miranda maximum principle in the field of elliptic systems. Whether such the $L^\infty$-estimate
holds in Lipschitz domains for $d\geq 4$ remains an interesting open problem.}
\end{remark}

The estimates $\eqref{pri:1.1}$ and $\eqref{pri:1.2}$ are referred to as the commutator estimates.
The key step in the proof of Theorem $\ref{thm:1.1}$ is to establish the following Dahlberg's bilinear estimate
\begin{equation}\label{pri:1.3}
\begin{aligned}
\bigg|\int_{\Omega} \nabla u \cdot v dx\bigg|
&\leq C
\Bigg\{ \Big(\int_{\Omega} |\nabla u|^2 \delta(x) dx\Big)^{\frac{1}{2}}
+ \Big(\int_{\Omega} |q|^2 \delta(x) dx\Big)^{\frac{1}{2}}
\Bigg\} \\
&\qquad\qquad\qquad\times
\Bigg\{\Big(\int_{\Omega} |\nabla v|^2 \delta(x) dx\Big)^{\frac{1}{2}}
+
\Big(\int_{\partial\Omega} |(v)^*|^2 dS\Big)^{\frac{1}{2}}
\Bigg\}
\end{aligned}
\end{equation}
where $\delta(x) = \text{dist}(x,\partial\Omega)$, and
$v=(v_i^\alpha)\in H^1(\Omega;\mathbb{R}^{d\times d})$.
The notation $(v)^*$ in $\eqref{pri:1.3}$ represents the nontangential maximal function of
$v$ on $\partial\Omega$, defined by
\begin{equation*}
(v)^*(x) = \sup_{y\in\Gamma_{N_0}(x)}|v(y)|,
\qquad \Gamma_{N_0}(x) = \big\{y\in\Omega:|y-x|\leq N_0\text{dist}(y,\partial\Omega)\big\},
\end{equation*}
where $x\in\partial\Omega$, and $N_0$ is sufficiently  large.
The bilinear estimate was originally proved in \cite{BD3} for harmonic
functions in Lipschitz domains. In term of the elliptic system with variable coefficients,
it was established by S. Hofmann \cite{SH}, and by Z. Shen \cite{Shen}, respectively, for
different considerations. In fact, this work is much influenced by \cite{Shen}.

Compared to the bilinear estimate established for elliptic equations (see \cite{BD3,BD4,Shen,SH}), the estimate $\eqref{pri:1.3}$ has one more square function caused by the pressure term $q$, and how to
handle that term will be the main difficulty in the technical standpoint. In term of layer potential,
we have the key observation that $\Delta q = 0$ in $\mathbb{R}^d\setminus\partial\Omega$, which
leads two important facts.
One is that the square function of $q$ may be controlled by the boundary data (see Lemma $\ref{lemma:2.1}$), which is based on the equivalence between the square function and
the nontangential maximal function (see \cite{RS,BDCKJPGV}).
The other is that $|q(x)|^2\delta(x)dx$ could be a Carleson measure provided
the velocity term $u$ is bounded.
Although these results may probably be known by experts,
a rigorous proof seems to have considerable merit, and benefits the readers.

The commutator is also denoted by $\big[\Lambda,\eta\big]f = \Lambda(\eta f) - \eta\Lambda(f)$ on
$\partial\Omega$,
and in view of $\eqref{f:1.1}$, it is not hard to derive the following identity,
which the proof of Theorem $\ref{thm:1.1}$ begin with,
\begin{equation}\label{f:1.2}
\begin{aligned}
\int_{\partial\Omega}\big[\Lambda,\eta\big]f\cdot h dS
& = \int_{\partial\Omega} [\Lambda(\eta f)]^\alpha h^\alpha dS - \int_{\partial\Omega} \eta
[\Lambda(f)]^{\alpha}h^\alpha dS \\
& = \int_{\Omega} u^\alpha \nabla \eta \cdot \nabla h^\alpha dx
-  \int_{\Omega} \nabla u^\alpha \cdot \nabla \eta h^\alpha dx
+ \int_{\Omega} q\nabla_\alpha \eta  h^\alpha dx
- \int_{\Omega} \pi\nabla_\alpha \eta u^\alpha dx,
\end{aligned}
\end{equation}
where $(h,\pi)\in H^1(\Omega;\mathbb{R}^d)\times L^2(\Omega)$ satisfies $\Delta h = \nabla\pi$ and $\text{div}(h) = 0$ in $\Omega$ and $(h)^*\in L^2(\partial\Omega)$. We mention that the summation
convention for repeated indices is used throughout.
Besides, the extension of $\eta$
is still denoted by itself, since proving $\eqref{pri:1.1}$ and $\eqref{pri:1.2}$ requires
the different way in extension of $\eta$. The former needs
$\|\nabla \eta\|_{L^\infty(\Omega)}\leq C\|\eta\|_{C^{0,1}(\partial\Omega)}$,
and $|\nabla^2\eta|^2\delta(x)dx$ is a Carleson measure.
The latter asks for an harmonic extension of $\eta$ to $\Omega$.

Observing the identity, the first two terms in the second line of $\eqref{f:1.2}$ will be
reduced to prove the Dahlberg's bilinear estimate $\eqref{pri:1.3}$, while to bound the following
integrals
\begin{equation*}
\bigg|\int_{\Omega} q\nabla_\alpha \eta  h^\alpha dx\bigg|
\quad
\text{and}
\quad
\bigg|\int_{\Omega} \pi\nabla_\alpha \eta u^\alpha dx\bigg|
\end{equation*}
is much involved, in which we borrowed some ideas from \cite{RS}.
We end the paragraph by mention that the core aim of the computation is to
control the conormal derivative through the tangential derivative of the solution on account of the equation satisfied by the solution itself.
In fact, in the computation we find that transferring the derivative seems to move the Carleson measure from one place to another.

In order to quick understand such the communicator estimates, for example $\eqref{pri:1.1}$,
we employ Fourier transformation to establish it in the upper half-space $\mathbb{R}_+^2$.
Since we will not pursue this approach, the concrete statements will be shown in the appendix of the paper. We mention that the estimates $\eqref{pri:1.1}$ and $\eqref{pri:1.2}$
could be established through the layer potential methods, which had been shown by S. Hofmann \cite{SH} in detail, and by Z. Shen concisely in \cite{Shen}.

To the best knowledge of the authors, the Dirichlet-to-Neumann map plays an fundamental role in the classical  Calder\'on problem, whose study may go back to the celebrated work \cite{CAP1}. It has many practical applications, notably to geophysics and medical imaging. We hope our results may be further
applied to the study of fluid mechanics. For more knowledges on this subject,
we refer the readers to \cite{JBAFME,CAP,BD1,BD2,BDCK,DJSK,MMMW,Shen1,Shen2} for more details and references therein.

We organize the paper as follows.
The proof of Theorem $\ref{thm:1.1}$ is given in Section 4,
while Section 3 is devoted to discuss the special case $\Omega =\mathbb{R}^d_+$, which is prepared for
readers who are unacquainted with Stokes systems or harmonic analysis,
and experts can skim this part. Some important lemmas related to the square function, nontangential
maximal functions and Carleson measures are presented in Section 2.

\section{Preliminaries}
The following lemma is related to extensions of Lipschitz functions.
\begin{lemma}\label{lemma:2.1}
Let $\Omega$ be a bounded Lipschitz domain. Let $\eta\in C^{0,1}(\partial\Omega)$ be a Lipschitz function on $\partial\Omega$. Then there exists $G\in C^{0,1}(\overline{\Omega})\cap C^\infty(\Omega)$ such
that $G=\eta$ on $\partial\Omega$, $\|\nabla G\|_{L^\infty(\Omega)}\leq C\|\eta\|_{C^{0,1}(\partial\Omega)}$,
and $d\nu = |\nabla^2 G|\delta(x) dx$ is a Carleson measure on $\Omega$ with norm
$\|\nu\|_{\mathcal{C}}\leq C\|\eta\|_{C^{0,1}(\partial\Omega)}$, where $C$ depends only on $\Omega$.
\end{lemma}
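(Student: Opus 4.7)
The plan is a two-stage construction—first Lipschitz-extend $\eta$ to all of $\mathbb{R}^d$, then mollify at a scale proportional to the distance to $\partial\Omega$—followed by verification of the four required properties.

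For the extension step I would invoke the McShane formula $\tilde{\eta}(x):=\inf_{y\in\partial\Omega}\bigl\{\eta(y)+\|\eta\|_{C^{0,1}(\partial\Omega)}|x-y|\bigr\}$, which yields $\tilde{\eta}\in C^{0,1}(\mathbb{R}^d)$ with $\tilde{\eta}|_{\partial\Omega}=\eta$ and $\|\nabla\tilde{\eta}\|_{L^\infty(\mathbb{R}^d)}\le C\|\eta\|_{C^{0,1}(\partial\Omega)}$. For the variable-scale mollification I would introduce Stein's regularized distance $\rho\in C^\infty(\Omega)\cap C^{0,1}(\overline{\Omega})$ satisfying $\rho\asymp\delta$ on $\Omega$ and $|\nabla^k\rho(x)|\le C_k\delta(x)^{1-k}$ for every $k\ge 1$, fix a radial $\varphi\in C_c^\infty(B_{1/4})$ with $\int\varphi=1$, pick $\epsilon>0$ small enough that $x-\epsilon\rho(x)z\in\Omega$ whenever $z\in\mathrm{supp}\,\varphi$, and set
\[
G(x):=\int_{\mathbb{R}^d}\tilde{\eta}\bigl(x-\epsilon\rho(x)z\bigr)\varphi(z)\,dz,\qquad x\in\Omega,
\]
extending by $G=\eta$ on $\partial\Omega$, which is forced by uniform continuity of $\tilde{\eta}$ as $\rho(x)\to 0$.

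For the pointwise estimates, smoothness of $G$ in $\Omega$ is clear. Differentiation under the integral together with $\|\nabla\tilde{\eta}\|_{L^\infty}\le C\|\eta\|_{C^{0,1}(\partial\Omega)}$ and $|\nabla\rho|\le C$ gives $\|\nabla G\|_{L^\infty(\Omega)}\le C\|\eta\|_{C^{0,1}(\partial\Omega)}$. The main technical step is the sharp second-derivative estimate, for which I would rewrite $G(x)=\int\tilde{\eta}(w)\,\Phi_x(w)\,dw$ with $\Phi_x(w):=(\epsilon\rho(x))^{-d}\varphi\!\left(\frac{x-w}{\epsilon\rho(x)}\right)$; since $\int\Phi_x(w)\,dw=1$ is constant in $x$, one has $\int\nabla_x^2\Phi_x(w)\,dw=0$, and therefore
\[
\nabla^2 G(x)=\int\bigl(\tilde{\eta}(w)-\tilde{\eta}(x)\bigr)\,\nabla_x^2\Phi_x(w)\,dw.
\]
On $\mathrm{supp}\,\Phi_x$ we have $|w-x|\lesssim\rho(x)\sim\delta(x)$, so $|\tilde{\eta}(w)-\tilde{\eta}(x)|\le C\|\eta\|_{C^{0,1}(\partial\Omega)}\,\delta(x)$, while $|\nabla_x^2\Phi_x(w)|\le C\rho(x)^{-d-2}$ on a support of volume $\sim\rho(x)^d$. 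Combining these gives the sharp pointwise estimate
\[
|\nabla^2 G(x)|\le\frac{C\|\eta\|_{C^{0,1}(\partial\Omega)}}{\delta(x)}.
\]

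The Carleson condition on $d\nu=|\nabla^2 G|\delta\,dx$ then follows easily: the pointwise bound above yields $|\nabla^2 G(x)|\delta(x)\le C\|\eta\|_{C^{0,1}(\partial\Omega)}$ on $\Omega$, so for every surface ball $\Delta\subset\partial\Omega$ of radius $r\le\mathrm{diam}(\Omega)$ one has
\[
\nu(T(\Delta))\le C\|\eta\|_{C^{0,1}(\partial\Omega)}\,|T(\Delta)|\le C(\Omega)\,\|\eta\|_{C^{0,1}(\partial\Omega)}\,\sigma(\Delta),
\]
after absorbing the factor $r\le\mathrm{diam}(\Omega)$ into a constant depending on $\Omega$. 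The main obstacle is therefore the mean-zero cancellation argument needed to extract the sharp $1/\delta$-bound on $\nabla^2 G$ in spite of the fact that $\nabla^2\tilde{\eta}$ only exists distributionally; once that is in hand, the Carleson claim is a one-line corollary.
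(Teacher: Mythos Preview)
Your construction is correct and is precisely the standard one: McShane extension followed by mollification at the regularized-distance scale, then the mean-zero cancellation trick to extract $|\nabla^2 G(x)|\le C\|\eta\|_{C^{0,1}}\,\delta(x)^{-1}$. One small point you leave implicit is that $G\in C^{0,1}(\overline{\Omega})$: continuity up to $\partial\Omega$ you verify, but global Lipschitz continuity on $\overline{\Omega}$ requires either a direct estimate $|G(x)-\eta(x_0)|\le C\|\eta\|_{C^{0,1}}\bigl(|x-x_0|+\rho(x)\bigr)\le C\|\eta\|_{C^{0,1}}|x-x_0|$ for $x_0\in\partial\Omega$, or invoking that bounded Lipschitz domains are quasiconvex so that $\|\nabla G\|_{L^\infty(\Omega)}<\infty$ already forces the Lipschitz bound on $\overline{\Omega}$.

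As for the paper: it does not actually give a proof of this lemma at all, but simply cites \cite[Lemma~4.1]{Shen}. Your argument supplies exactly what is being quoted there (the construction in Shen's paper is the same variable-scale mollification), so there is no substantive difference in approach to report---you have just written out what the paper takes as a black box.
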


\begin{proof}
The proof may be found in \cite[Lemma 4.1]{Shen}.
\end{proof}

\begin{remark}\label{re:2.2}
\emph{In the following context, we do not distinguish the notation $G$ from $\eta$,
and always use $\eta$ instead of $G$.}
\end{remark}

\begin{thm}\label{thm:2.1}
Let $\Omega$ be a bounded Lipschitz domain in $\mathbb{R}^d$ with $d\geq 3$.
Suppose $(u,q)$ is the solution of $\Delta u = \nabla q$ and $\emph{div}(u) = 0$ in $\Omega$, and
$(u)^*\in L^2(\partial\Omega)$. Then we have
\begin{equation}\label{pri:2.4}
\int_\Omega |\nabla u(x)|^2\delta(x) dx
\leq C\int_{\partial\Omega}|(u)^*|^2 dS
\end{equation}
and there exists a function $\tilde{q}$ such that $\tilde{q}-q\in \mathbb{R}$, and
\begin{equation}\label{pri:2.5}
\int_\Omega |\nabla \tilde{q}(x)|^2[\delta(x)]^3 dx
\leq  \int_\Omega |\tilde{q}(x)|^2\delta(x) dx
\leq C\int_{\partial\Omega}|u|^2 dS
\end{equation}
where $C$ depends only on $d$ and $\Omega$.
\end{thm}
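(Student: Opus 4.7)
The plan is to combine interior regularity with two standard tools: $\tilde q$ is harmonic in $\Omega$ (taking the divergence of $\Delta u = \nabla q$ and using $\text{div}(u)=0$ yields $\Delta q = 0$), and for harmonic functions in Lipschitz domains the square function and the nontangential maximal function are $L^2$-equivalent, as cited from \cite{RS,BDCKJPGV}. The three inequalities in \eqref{pri:2.4}--\eqref{pri:2.5} are handled separately.

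For \eqref{pri:2.4}, I would start from the interior Caccioppoli inequality for the Stokes system: on any ball $B(x,r)\subset\Omega$, testing with $\varphi^2(u-c_x)$ corrected by a Bogovskii term to preserve the divergence-free constraint (so that the pressure drops out) gives $\int_{B(x,r/2)}|\nabla u|^2 dy \leq Cr^{-2}\int_{B(x,r)}|u-c_x|^2 dy$ for any constant vector $c_x$. Decomposing $\Omega$ by a Whitney family $\{Q_j\}$ with side length $\ell_j \sim \delta$ on $Q_j$, this gives $\int_\Omega|\nabla u|^2 \delta\, dx \lesssim \sum_j \ell_j^{-1}\int_{Q_j^*}|u-c_j|^2 dy$; choosing $c_j$ as a nontangential limit of $u$ at a boundary point beneath $Q_j$ yields $|u(y)-c_j|\lesssim(u)^*(x_0)$ on $Q_j^*$, and summing with finite boundary-ball overlap gives \eqref{pri:2.4}.

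For the first inequality of \eqref{pri:2.5}, harmonicity of $\tilde q$ gives the interior mean-value gradient bound $|\nabla \tilde q(x)|^2 \leq C\delta(x)^{-d-2}\int_{B(x,\delta(x)/2)}|\tilde q|^2 dy$. Multiplying by $\delta(x)^3$, using $\delta(y)\sim\delta(x)$ on $B(x,\delta(x)/2)$, and switching the order of integration by Fubini yields at once $\int_\Omega|\nabla\tilde q|^2\delta^3 dx \leq C\int_\Omega|\tilde q|^2\delta\, dy$.

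The main obstacle is the second inequality of \eqref{pri:2.5}, namely $\int_\Omega|\tilde q|^2\delta\, dx\leq C\|u\|_{L^2(\partial\Omega)}^2$. My plan is to fix the constant in $\tilde q = q - c$ via an interior normalization (e.g., an average) and then combine the harmonic square-function / nontangential-maximal equivalence from \cite{RS,BDCKJPGV} with the standard $L^2$-Dirichlet theory of the Stokes system in Lipschitz domains (via layer-potential methods), which controls the weighted pressure norm by the boundary trace $u|_{\partial\Omega}\in L^2(\partial\Omega)$ (which exists since $(u)^*\in L^2$). The technical subtlety is the careful choice of normalizing constant $c$, which accounts for the freedom $\tilde q - q\in\mathbb{R}$ allowed by the statement.
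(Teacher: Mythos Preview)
The paper does not actually prove this theorem; it simply cites \cite[Theorem~A.1, Lemma~A.9]{RS} and \cite{BDCKJPGV}. Your handling of the first inequality in \eqref{pri:2.5} (mean-value gradient bound for the harmonic $\tilde q$ plus Fubini) is correct and standard, and your plan for the second inequality of \eqref{pri:2.5}---represent the pressure through layer potentials and reduce to the harmonic square-function theory---is exactly the route of \cite{RS}; the paper in fact reproduces that construction later in the proof of Lemma~\ref{lemma:2.2}, writing $\tilde q=-\partial_k W_k$ with each $W_k$ harmonic and then invoking the known estimate $\int_\Omega|\nabla W_k|^2\delta\,dx\le C\|(W_k)^*\|_{L^2(\partial\Omega)}^2$.

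Your argument for \eqref{pri:2.4}, however, has a real gap. After Caccioppoli on a Whitney cube $Q_j$ of side $\ell_j$ you get a contribution $\ell_j^{-1}\int_{Q_j^*}|u-c_j|^2$, and with your choice $c_j=$ nontangential boundary value at $x_0(j)$ this is at most $C\ell_j^{d-1}\big[(u)^*(x_0(j))\big]^2$. But the boundary shadows of Whitney cubes do \emph{not} have finite total overlap: at each fixed dyadic scale they cover $\partial\Omega$ with bounded multiplicity, and there are infinitely many scales, so $\sum_j \ell_j^{d-1}\big[(u)^*(x_0(j))\big]^2$ is comparable to $\sum_k\int_{\partial\Omega}|(u)^*|^2\,dS$, which diverges. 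The crude bound $|u(y)-c_j|\le 2(u)^*(x_0)$ throws away all decay across scales, and no ``finite overlap'' claim can repair this. The proofs in \cite{RS,BDCKJPGV} do not proceed by Caccioppoli-plus-Whitney; they use the structure of the Stokes system (layer-potential representation, or the biharmonic stream function in $d=2$ as in the appendix here) to reduce to square-function estimates for harmonic or polyharmonic functions, where the inequality $S\lesssim N$ is already established by quite different means (integration-by-parts identities, good-$\lambda$ inequalities). You need to feed the equation in more seriously than through an interior energy bound.
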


\begin{proof}
The proofs may be found in \cite[Theorem A.1]{RS} and
\cite[Lemma A.9]{RS}, as well as $\cite{BDCKJPGV}$.
\end{proof}

\begin{remark}\label{re:2.1}
\emph{If $\Omega = \mathbb{R}^d_+$, the estimates $\eqref{pri:2.4}$ and $\eqref{pri:2.5}$ will still be true, provided that the solution $(u,q)$ satisfies an additional vanishing condition as $|x|$ goes to infinity.
In fact, the proof will be simpler than that given in \cite{RS}.
We mention that it is clear to see that $\tilde{q}$ could be
replaced by $q$, provided we introduce the additional condition $\int_\Omega q dx= 0$. }
\end{remark}

\begin{lemma}
[Key identity]
Assume $\eta,f$ are given as in Theorem $\ref{thm:1.1}$. Let
$(u,q)\in H^1(\Omega;\mathbb{R}^d)\times L^2(\Omega)$ be the solution of $\eqref{pde:1.1}$.
Then we have the following identity
\begin{equation}\label{pri:2.1}
\begin{aligned}
\int_{\partial\Omega}\big[\Lambda,\eta\big]f\cdot h dS
& = \int_{\partial\Omega} \Lambda(\eta f^\alpha)h^\alpha dS - \int_{\partial\Omega} \eta\Lambda(f^\alpha)h^\alpha dS \\
& = \int_{\Omega} u^\alpha \nabla \eta \cdot \nabla h^\alpha dx
-  \int_{\Omega} \nabla u^\alpha \cdot \nabla \eta h^\alpha dx
+ \int_{\Omega} q\nabla_\alpha \eta  h^\alpha dx
- \int_{\Omega} \pi\nabla_\alpha \eta u^\alpha dx,
\end{aligned}
\end{equation}
where $(h,\pi)\in H^1(\Omega;\mathbb{R}^d)\times L^2(\Omega)$ satisfies $\Delta h = \nabla\pi$ and $\emph{div}(h) = 0$ in $\Omega$ and $(h)^*\in L^2(\partial\Omega)$.
\end{lemma}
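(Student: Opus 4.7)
The plan is to express each of the two boundary integrals on the left-hand side of $\eqref{pri:2.1}$ as a volume integral using the weak formulation of the conormal derivative, and then to cancel the symmetric piece. The core tool is the Green-type identity
\begin{equation*}
\int_{\partial\Omega} [\Lambda(w|_{\partial\Omega})]^\alpha \phi^\alpha dS = \int_\Omega \nabla w^\alpha \cdot \nabla \phi^\alpha dx - \int_\Omega p \,\text{div}(\phi) dx,
\end{equation*}
valid for any Stokes pair $(w,p)$ (i.e.\ $\Delta w = \nabla p$, $\text{div}(w)=0$) and any $\phi\in H^1(\Omega;\mathbb{R}^d)$. This is exactly what one gets by testing $\Delta w=\nabla p$ against $\phi$, integrating by parts, and recognizing the boundary terms as the weak definition of the conormal derivative in $\eqref{f:1.1}$.

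For the second summand $\int_{\partial\Omega}\eta\Lambda(f)^\alpha h^\alpha dS$, I would apply this identity to $(u,q)$ with $\phi=\eta h$. Because $\text{div}(\eta h)=\nabla\eta\cdot h$ (using $\text{div}(h)=0$) and $\nabla(\eta h^\alpha)=h^\alpha\nabla\eta+\eta\nabla h^\alpha$, expansion gives
\begin{equation*}
\int_{\partial\Omega}\eta\Lambda(f)^\alpha h^\alpha dS = \int_\Omega h^\alpha \nabla u^\alpha \cdot \nabla\eta dx + \int_\Omega \eta \nabla u^\alpha \cdot \nabla h^\alpha dx - \int_\Omega q \nabla_\alpha\eta\, h^\alpha dx.
\end{equation*}
For the first summand $\int_{\partial\Omega}\Lambda(\eta f)^\alpha h^\alpha dS$, I would first exploit the symmetry $\int_{\partial\Omega}\Lambda(g)\cdot\psi\,dS=\int_{\partial\Omega}g\cdot\Lambda(\psi|_{\partial\Omega})\,dS$, which follows directly from writing both sides via the Green identity and observing that each reduces to the same quantity $\int_\Omega\nabla u^\alpha\cdot\nabla h^\alpha dx$. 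This rewrites $\int_{\partial\Omega}\Lambda(\eta f)^\alpha h^\alpha dS = \int_{\partial\Omega}(\eta f)^\alpha \Lambda(h|_{\partial\Omega})^\alpha dS$, after which applying the Green identity to $(h,\pi)$ with $\phi=\eta u$ yields the analogous expansion with $u$ and $h$ swapped and $q$ replaced by $\pi$. Subtracting the two expansions cancels the symmetric term $\int\eta\nabla u^\alpha\cdot\nabla h^\alpha dx$, and the four remaining terms are precisely those on the right-hand side of $\eqref{pri:2.1}$.

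The algebraic bookkeeping is routine; the only genuinely delicate step is justifying the integration by parts and the symmetry formula at the stated level of regularity. One has $(h,\pi)\in H^1\times L^2$ with $(h)^*\in L^2(\partial\Omega)$, but $\Lambda$ is naturally defined on $H^{1/2}$-data, so the pairing $\int_{\partial\Omega}\Lambda(\eta f)\cdot h\,dS$ has to be interpreted via the nontangential trace. I would handle this by first proving the identity for smooth boundary data (where $u,q,h,\pi$ are all classical and every integration by parts is transparent), and then passing to the limit using the continuity $\Lambda\colon H^{1/2}(\partial\Omega)\to H^{-1/2}(\partial\Omega)$ noted after $\eqref{f:1.1}$ together with the nontangential convergence of $h$ to its trace.
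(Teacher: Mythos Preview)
Your proof is correct and follows essentially the same Green-formula approach as the paper. The only organizational difference is that the paper explicitly introduces the auxiliary Stokes solution $(w,p)$ with boundary data $\eta f$ and applies Green's second identity to the pair $(w,h)$, whereas you shortcut this step by invoking the symmetry of $\Lambda$ and then testing $(h,\pi)$ directly against $\eta u$; the underlying computation is identical.
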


\begin{proof}
The main tool is the Green formula, and we provide a proof for the sake of the completeness.
It is fine to let $(w,p)$ satisfy $[\Lambda(\eta f)]^\alpha = (\partial w^\alpha/\partial n) -
n_\alpha p$, and be the solution of
\begin{equation}\label{pde:2.1}
 \Delta w = \nabla p, \qquad \text{div}(w) = 0 \quad \text{in}~\Omega,
 \qquad\text{and}\quad w = \eta f \quad\text{on}~\partial\Omega.
\end{equation}
We have the following computation
\begin{equation}\label{f:2.1}
\begin{aligned}
\int_{\partial\Omega}\frac{\partial w^\alpha}{\partial n}h^\alpha dS
& = \int_{\partial\Omega}\frac{\partial h^\alpha}{\partial n} w^\alpha dS
+ \int_\Omega \Big(\Delta w^\alpha h^\alpha - w^\alpha \Delta h^\alpha \Big) dx \\
&= \int_\Omega \text{div}\big(\nabla h^\alpha u^\alpha \eta\big) dx
+ \int_\Omega \Big(\nabla_\alpha p h^\alpha - w^\alpha \Delta h^\alpha \Big) dx \\
& = \int_\Omega \big(u^\alpha \eta - w^\alpha\big)\Delta h^\alpha dx
+ \int_\Omega \big(u^\alpha \nabla \eta + \eta \nabla u^\alpha\big)\cdot \nabla h^\alpha dx
+ \int_\Omega \nabla_\alpha p h^\alpha dx,
\end{aligned}
\end{equation}
where the second equality follows from the divergence theorem. Using the equation $\eqref{pde:2.1}$
and $\Delta h = \nabla\pi$ and $\text{div}(h) = 0$ in $\Omega$,
\begin{equation}\label{f:2.2}
\begin{aligned}
\int_\Omega \big(u^\alpha \eta - w^\alpha\big)\Delta h^\alpha dx
&=\int_\Omega \nabla_\alpha \pi (u^\alpha \eta - w^\alpha) dx
= -\int_\Omega \pi \nabla_\alpha \eta u^\alpha dx, \\
\int_{\partial\Omega} n_\alpha p h^\alpha dS &= \int_{\Omega}\text{div}(ph) dx
= \int_{\Omega}\nabla_\alpha ph^\alpha dx,
\end{aligned}
\end{equation}
where we recall that $u=f$ on $\partial\Omega$.
Combining the identities $\eqref{f:2.1}$ and $\eqref{f:2.2}$, we have
\begin{equation}\label{f:2.3}
\begin{aligned}
\int_{\partial\Omega}\Lambda(\eta f)h dS
&= \int_{\partial\Omega} \Big(\frac{\partial w^\alpha}{\partial n}
- n_\alpha p\Big)h^\alpha dS \\
& = \int_\Omega \big(u^\alpha \nabla \eta + \eta \nabla u^\alpha\big)\cdot \nabla h^\alpha dx
-\int_\Omega \pi \nabla_\alpha \eta u^\alpha dx
\end{aligned}
\end{equation}

Then by the same token, we have the following expression
\begin{equation*}
\int_{\partial\Omega} \eta\Lambda(f)h dS =
\int_\Omega \nabla \eta \cdot \nabla u^\alpha h^\alpha dx
+ \int_\Omega \eta \nabla u^\alpha\cdot \nabla h^\alpha dx
- \int_\Omega q \nabla_\alpha \eta h^\alpha dx,
\end{equation*}
which together with the identity $\eqref{f:2.3}$ gives the desired result $\eqref{pri:2.1}$, and we
have completed the proof.
\end{proof}

\begin{lemma}\label{lemma:2.2}
Let $\Omega\subset\mathbb{R}^d$ be a bounded Lipschitz domain. Suppose that $(u,q)$ satisfies
$\Delta u = \nabla q$ and $\emph{div}(u) = 0$ in $\Omega$, and $|u|\in L^\infty(\Omega)$.
Then $|\nabla u(x)|^2\delta(x)dx$ and $|q(x)|^2\delta(x)dx$ will be the Carleson measures.
Moreover, for any $(v)^*\in L^2(\partial\Omega)$ there holds
\begin{equation}\label{pri:2.6}
\max\bigg\{
\int_\Omega |v|^2 |\nabla u|^2\delta(x) dx,
~\int_\Omega |v|^2 |q|^2 \delta(x) dx
\bigg\}\leq
C\|u\|_{L^\infty(\Omega)}^2\|(v)^*\|_{L^2(\partial\Omega)}^2,
\end{equation}
where $\delta(x)=\emph{dist}(x,\partial\Omega)$, and $C$ depends only on $d$ and $\Omega$.
\end{lemma}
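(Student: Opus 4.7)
The plan is to establish the two Carleson-measure claims separately and then deduce $\eqref{pri:2.6}$ from Carleson's inequality. The latter asserts that for any positive Carleson measure $d\mu$ on $\Omega$, $\int_\Omega |v|^2 \, d\mu \leq C\|\mu\|_{\mathcal{C}} \|(v)^*\|_{L^2(\partial\Omega)}^2$; once both $d\mu = |\nabla u|^2 \delta\, dx$ and $d\mu = |q|^2 \delta\, dx$ are shown to have Carleson norm $\leq C\|u\|_{L^\infty}^2$, estimate $\eqref{pri:2.6}$ is immediate.

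I would begin with $|\nabla u|^2 \delta(x)\, dx$. Since $(u,q)$ solves the homogeneous Stokes system, interior regularity yields the scaling bound $\|\nabla u\|_{L^\infty(B_r(x))} \leq Cr^{-1}\|u\|_{L^\infty(B_{2r}(x))}$ whenever $B_{2r}(x)\subset\Omega$, hence the pointwise estimate $|\nabla u(y)| \leq C\delta(y)^{-1}\|u\|_{L^\infty(\Omega)}$. Fixing a Whitney decomposition $\Omega = \bigsqcup_j Q_j$ with $\ell(Q_j)\sim\delta$ on $Q_j$, this gives $\int_{Q_j} |\nabla u|^2 \delta\, dy \leq C\ell(Q_j)^{d-1}\|u\|_{L^\infty}^2$. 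The Carleson norm bound then follows from the standard geometric estimate $\sum_{Q_j \subset T(\Delta(x_0,r))} \ell(Q_j)^{d-1} \leq Cr^{d-1}$, where $T(\Delta(x_0,r)) = B(x_0,r)\cap\Omega$.

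For $|q|^2 \delta(x)\, dx$, the key additional observation is that $\Delta q = 0$ in $\Omega$ (obtained by taking $\text{div}$ of $\Delta u = \nabla q$ and using $\text{div}(u) = 0$), so $q$ is harmonic. For a boundary ball $\Delta = \Delta(x_0,r)$, construct a Lipschitz sawtooth subdomain $D = D(x_0,r)\subset\Omega$ with $\Omega\cap B(x_0,r/2) \subset D \subset \Omega\cap B(x_0,2r)$, surface measure $|\partial D|\leq Cr^{d-1}$, Lipschitz character uniform in $r$, and $\delta_D(y)\geq c\,\delta(y)$ on the subregion $\Omega\cap B(x_0,r/2)$. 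Applying estimate $\eqref{pri:2.5}$ of Theorem $\ref{thm:2.1}$ on $D$ (using that the nontangential boundary trace of $u$ on $\partial D$ is bounded by $\|u\|_{L^\infty(\Omega)}$) yields
\begin{equation*}
\int_D |q - c_D|^2 \delta_D\, dy \;\leq\; Cr^{d-1}\|u\|_{L^\infty}^2
\end{equation*}
for an appropriate constant $c_D$, and the inequality $\delta_D \geq c\,\delta$ transfers this to the target Carleson region. The residual step is to control $c_D$ so as to upgrade this to a bound on $|q|^2\delta\,dy$; I would do so by combining the global normalization $\int_\Omega q\, dy = 0$ with the interior Caccioppoli-type oscillation estimate $\|q - \bar q_{B_{2r}}\|_{L^\infty(B_r)} \leq Cr^{-1}\|u\|_{L^\infty(B_{2r})}$ for the Stokes pressure, chained along a Whitney path.

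The main technical obstacle is precisely this last step: since $q$ is determined only modulo constants and need not lie in $L^\infty(\Omega)$ even under the normalization $\int_\Omega q = 0$, the constants $c_D$ must be controlled uniformly across all scales $r$. All other ingredients---interior regularity for Stokes, the Whitney decomposition, the sawtooth construction, and Carleson's inequality---are entirely standard.
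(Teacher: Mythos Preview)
Your treatment of $|\nabla u|^2\delta\,dx$ has a genuine gap: the ``standard geometric estimate'' $\sum_{Q_j\subset T(\Delta(x_0,r))}\ell(Q_j)^{d-1}\leq Cr^{d-1}$ is false. In a tent of radius $r$, the Whitney cubes at dyadic height $\sim 2^{-k}r$ number roughly $2^{k(d-1)}$ and each contributes $(2^{-k}r)^{d-1}$, so the sum is $\sim r^{d-1}$ at \emph{every} scale $k$ and diverges. Equivalently, the pointwise bound $|\nabla u|\leq C\delta^{-1}\|u\|_{L^\infty}$ only yields $|\nabla u|^2\delta\lesssim\delta^{-1}\|u\|_{L^\infty}^2$, whose integral over a tent is logarithmically divergent; the Carleson property genuinely uses the equation, not just interior decay. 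The fix is precisely the localization you already set up for the pressure: on each sawtooth $D=D(x_0,r)$ apply $\eqref{pri:2.4}$ to obtain $\int_D|\nabla u|^2\delta_D\,dy\leq C\|(u)^*\|_{L^2(\partial D)}^2\leq Cr^{d-1}\|u\|_{L^\infty}^2$, and transfer via $\delta_D\geq c\,\delta$ on the Carleson region. (The paper's argument for this part is written in the same Whitney-cube form and is equally elliptic on the passage from $\eqref{f:2.16}$ to the Carleson condition.)

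For $|q|^2\delta\,dx$ your route differs from the paper's. The paper represents $u$ as a double layer potential with density $\phi$ satisfying $\|\phi\|_{L^2(\partial\Omega)}\leq C\|u\|_{L^\infty}$, computes the associated pressure explicitly as $\tilde q=-\partial_k W_k$ with each $W_k$ harmonic, and then invokes the square-function estimate for the harmonic $W_k$. This pins $\tilde q$ down by a formula and so sidesteps the additive-constant issue you flag. Your sawtooth approach via $\eqref{pri:2.5}$ is also sound, and the control of $c_D$ you outline does work: chaining the oscillation bound $\|q-\bar q_{B_r}\|_{L^\infty}\leq Cr^{-1}\|u\|_{L^\infty}$ along a Whitney path from a fixed interior anchor to depth $\sim r$ gives $|c_D|\leq Cr^{-1}\|u\|_{L^\infty}$, and since $\int_{T(\Delta_r)}\delta\,dy\sim r^{d+1}$ this yields $|c_D|^2\int_{T(\Delta_r)}\delta\,dy\leq Cr^{d-1}\|u\|_{L^\infty}^2$, exactly what is needed. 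The layer-potential route is shorter once that machinery is available; yours is more self-contained but requires the extra chaining step.
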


\begin{proof}
For a cube $Q$ in $\mathbb{R}^{d-1}$, we define the tent over $Q$ to be the cube
$T(Q) = Q \times (0,l(Q)]$, also denoted by $Q^*$. Then we take the Whitney decomposition of $\Omega$,
and let $Q^*$ be one of such cubes, which satisfy
the property that $3Q^*\subset\Omega$ and $l(Q)\approx\text{dist}(Q^*,\partial\Omega)$.
Hence, in order to verify $d\nu_u=|\nabla u|^2\delta(x)dx$ is
a Carleson measure, it suffices to prove
\begin{equation}\label{f:2.16}
d\nu_u(T(Q)) = \int_{T(Q)} |\nabla u(x)|^2\delta(x)dx  \leq C\|u\|_{L^\infty(\Omega)}^2[l(Q)]^{d-1}.
\end{equation}
Since $\Delta u = \nabla q$ in $\Omega$, we have the interior estimates
\begin{equation*}
|\nabla u(x)| \leq \frac{C}{\delta(x)}\Big(\dashint_{B(x,\delta(x))}|u(y)|^2dy\Big)^{1/2}
\end{equation*}
and this implies that
\begin{equation*}
\int_{T(Q)} |\nabla u|^2 \delta(x) dx \leq C[l(Q)]^{-1}
\int_{T(Q)}\dashint_{B(x,l(Q))}|u(y)|^2dy dx
\leq C\|u\|_{L^\infty(\Omega)}^2[l(Q)]^{d-1}.
\end{equation*}

We now proceed to show $d\nu_q = |q|^2\delta(x)dx$ is another Carleson measure. The original idea will
be found in \cite[pp.1203-1204]{RS}, and we provide the proof for the sake of completeness.
Let $\omega_d$ denote the surface area of the unit sphere in $\mathbb{R}^d$. We introduce
the corresponding fundamental solution $(\Gamma_{ij},\Pi^i)$ of the Stokes system, which is given by
\begin{equation*}
\Gamma_{ij}(x) = \frac{1}{2\omega_d}
\bigg\{\frac{\kappa_{ij}}{(n-2)|x|^{d-1}}+\frac{x_ix_j}{|x|^d}\bigg\},
\qquad\quad
\Pi^i(x) = \frac{1}{\omega_d}\frac{x_i}{|x|^d}
\end{equation*}
(see for example \cite{OAL}). Then, in view of \cite[Section 3]{EBFCEKGCV}, $u$ can be represented in terms of a double layer potential
\begin{equation*}
 u^i(x) = \int_{\partial\Omega}
 \Big\{\frac{\partial}{\partial y_k}\big\{\Gamma_{ij}(x-y)\big\}n_k(y) - \Pi^i(x-y)n_j(y)\Big\}\phi_j(y)dS(y)
\end{equation*}
where $\|\phi\|_{L^2(\partial\Omega)}\leq C\|u\|_{L^\infty(\Omega)}$. By a standard computation,
we have
\begin{equation*}
\Delta u^i(x) = -\frac{\partial}{\partial x_i}\frac{\partial}{\partial x_k}
\int_{\partial\Omega}\frac{x_j-y_j}{\omega_d|x-y|^d}n_k(y)\phi_j(y)dS(y)
= \frac{\partial}{\partial x_i}\big\{\tilde{q}\big\},
\end{equation*}
where
\begin{equation*}
\tilde{q}(x) = - \frac{\partial W_k}{\partial x_k},
\quad\text{and}\quad
W_k(x) = \int_{\partial\Omega}\frac{x_j-y_j}{\omega_d|x-y|^d}n_k(y)\phi_j(y)dS(y).
\end{equation*}
From $\nabla (q-\tilde{q}) = 0$, it follows that $q-\tilde{q}\in\mathbb{R}$, and it is not hard to
observe $\Delta W_k = 0$ in $\Omega$.
Hence,
\begin{equation*}
\int_\Omega |\tilde{q}|^2\delta(x)dx
\leq \sum_{k=1}^d\int_\Omega|\nabla W_k|^2\delta(x)dx
\leq C\sum_{k=1}^d\int_{\partial\Omega}|(W_k)^*|^2dS
\leq C\|\phi\|_{L^2(\partial\Omega)}^2\leq C\|u\|_{L^\infty(\Omega)}^2,
\end{equation*}
where the fourth inequality follows from \cite{V}, and
and this implies that
\begin{equation}\label{f:2.17}
\nu_q(T(Q)) = \int_\Omega |q|^2\delta(x)dx \leq C\|\phi\|_{L^2(\partial\Omega)}^2\leq C\|u\|_{L^\infty(\Omega)}^2|Q|.
\end{equation}

Consequently, combining the estimates $\eqref{f:2.16}$, $\eqref{f:2.17}$ and
\cite[Corollary 7.3.6]{JD} leads to the desired estimate $\eqref{pri:2.6}$, and we are done.
\end{proof}

\section{Special case: $\Omega = \mathbb{R}_+^d$}

Let $\Omega = \mathbb{R}^d_+ = \{(x^\prime,t)\in\mathbb{R}^d:t>0\}$ be the upper half-space in $\mathbb{R}^d$, and
$\partial\Omega = \{(x^\prime,0), x^\prime\in\mathbb{R}^{d-1}\}=\mathbb{R}^{d-1}$.
In the section, we extend the investigation of Section $\ref{section:5}$ to the higher dimensional space $\mathbb{R}_+^d$
with $d\geq 3$ but using a different methods. Since the main techniques applied to Lipschitz domains have already appeared in such the case, we take it as
an example to make the main idea clear in the full proof of Theorem $\ref{thm:1.1}$.

\begin{thm}\label{thm:3.1}
Let $f=(f^\alpha)\in L^2(\mathbb{R}^{d-1};\mathbb{R}^d)$ satisfy the compatibility condition $\int_{\partial\Omega} f^d dS = 0$. Suppose $(u,q)$ is the solution of $\eqref{pde:1.1}$ with boundary data $f$.
Then for any $\eta\in C^{0,1}_0(\mathbb{R}^{d-1})$ such that the quantity $\Lambda(\eta f)$ is well-defined,
we have
\begin{equation}\label{pri:3.3}
\big\|\Lambda(\eta f) - \eta\Lambda(f)\big\|_{L^2(\mathbb{R}^{d-1})}
\leq C\|\eta\|_{C^{0,1}(\mathbb{R}^{d-1})}\|f\|_{L^2(\mathbb{R}^{d-1})},
\end{equation}
where $C$ depends only on $d$.
\end{thm}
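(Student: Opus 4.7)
My plan is to follow the same strategy the authors sketched after the key identity, but streamlined for the half-space, where the geometry makes the Carleson measure machinery essentially explicit. I would first dualize: by the compatibility constraint on $\eta f$, it suffices to show
\[
\bigg|\int_{\mathbb{R}^{d-1}}[\Lambda,\eta]f\cdot g\, dS\bigg|\le C\|\eta\|_{C^{0,1}}\|f\|_{L^2}\|g\|_{L^2}
\]
for $g\in L^2(\mathbb{R}^{d-1};\mathbb{R}^d)$ with $\int g\cdot n\, dS=0$. I would then introduce the auxiliary Stokes solution $(h,\pi)$ in $\mathbb{R}^d_+$ with Dirichlet data $g$, so that by the $L^2$-Dirichlet theory of Fabes--Kenig--Verchota (and the analogous square-function bounds of Theorem \ref{thm:2.1}) one has $\|(h)^*\|_{L^2(\partial\Omega)}\le C\|g\|_{L^2}$, $\int|\nabla h|^2\delta\,dx\le C\|g\|_{L^2}^2$, and $\int|\pi|^2\delta\,dx\le C\|g\|_{L^2}^2$ (after normalizing $\pi$ by a constant). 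Symmetrically, $(u,q)$ satisfies the same estimates with $f$ in place of $g$. I would also fix an extension of $\eta$ to $\mathbb{R}^d_+$ as in Lemma \ref{lemma:2.1}, so that $\|\nabla\eta\|_{L^\infty}\le C\|\eta\|_{C^{0,1}}$ and $|\nabla^2\eta|^2\delta(x)\,dx$ is a Carleson measure with norm controlled by $\|\eta\|_{C^{0,1}}^2$.

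Having prepared the data, I would invoke the key identity \eqref{pri:2.1} and treat the four integrals in turn. The two \emph{gradient} terms $\int u^\alpha\nabla\eta\cdot\nabla h^\alpha$ and $\int\nabla u^\alpha\cdot\nabla\eta\, h^\alpha$ fit the format of the Dahlberg bilinear estimate \eqref{pri:1.3}: in each case I would pick the tensor $v$ so that $|\nabla v|^2\lesssim|\nabla\eta|^2|\nabla(\cdot)|^2+|\nabla^2\eta|^2|\cdot|^2$, absorbing the cross derivative into the Carleson term via the standard identity
\[
\int_{\mathbb{R}^d_+} |\nabla^2\eta|^2|w|^2\,\delta\,dx\le C\|\eta\|_{C^{0,1}}^2\|(w)^*\|_{L^2(\partial\Omega)}^2,
\]
and controlling $\int|(v)^*|^2 dS$ by $\|\nabla\eta\|_\infty^2\|(w)^*\|_{L^2}^2$. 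The square functions for $u$ and $q$ coming from the other factor in \eqref{pri:1.3} are bounded by $\|f\|_{L^2}$ using Theorem \ref{thm:2.1}.

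The real difficulty lies in the two \emph{pressure} terms $I_3=\int q\nabla_\alpha\eta\, h^\alpha$ and $I_4=-\int\pi\nabla_\alpha\eta\, u^\alpha$, and this is where I expect to spend the most effort. The plan for $I_4$ is to exploit $\operatorname{div}(u)=0$ to write $\nabla_\alpha\eta\, u^\alpha=\operatorname{div}(\eta u)$ and integrate by parts, producing a volume integral $\int\nabla\pi\cdot\eta u\,dx=\int\Delta h\cdot\eta u\,dx$ plus a boundary contribution that is controlled by $\int_{\mathbb{R}^{d-1}}|\pi_{\partial}||\eta||f^d|\,dS$ (and in fact vanishes against the tangential component by the compatibility). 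The resulting volume integral is integrated by parts once more, converting it back into a combination of bilinear terms of the form $\int\nabla h^\alpha\cdot\nabla(\eta u^\alpha)\,dx$, which I then estimate by the Dahlberg bilinear estimate \eqref{pri:1.3} applied with $u$ and $h$ in exchanged roles. The term $I_3$ is handled by the mirror-image argument, using $\operatorname{div}(h)=0$ and $\nabla q=\Delta u$. The harmonicity of $q$ and $\pi$ (and hence the square-function / nontangential-maximal equivalences in Theorem \ref{thm:2.1}) is what makes every resulting square function integrable: it transfers a Carleson datum from the pressure back to the velocity boundary data. Once these four bounds are assembled, each produces a factor $\|\eta\|_{C^{0,1}}\|f\|_{L^2}\|g\|_{L^2}$, and duality against $g$ closes the argument.
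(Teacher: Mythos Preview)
Your treatment of the two gradient terms $I_1,I_2$ via the Dahlberg bilinear estimate is exactly what the paper does, and the preparation of the extension of $\eta$ and the square-function bounds from Theorem~\ref{thm:2.1} is correct. The gap is in your handling of the pressure terms $I_3,I_4$.

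When you write $\nabla_\alpha\eta\,u^\alpha=\operatorname{div}(\eta u)$ and integrate by parts, the boundary contribution is $\int_{\mathbb{R}^{d-1}}\pi\,\eta f^d\,dS$; the second integration by parts (on $\int\Delta h\cdot\eta u$) produces another boundary term $\int_{\mathbb{R}^{d-1}}(\partial h/\partial n)\cdot\eta f\,dS$. Neither is controlled by $\|g\|_{L^2}$: for $L^2$ Dirichlet data the pressure trace and the normal derivative of the velocity are first-order in $g$ (in the half-space one sees this explicitly from the Fourier formula $\widehat{\pi}|_{t=0}=-2|k|\widehat{g^d}+2ik\widehat{g^1}$ in the appendix), so $\pi|_{\partial\Omega}$ and $\partial_n h|_{\partial\Omega}$ lie only in $H^{-1}$, not in $L^2$. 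Your parenthetical that the boundary term ``vanishes against the tangential component by the compatibility'' does not apply here: the term involves only the normal component $f^d$, and the mean-zero conditions $\int f^d=\int\eta f^d=0$ do not kill it. In fact, if you keep both boundary terms and combine them, you recover exactly $\int_{\partial\Omega}\Lambda(g)\cdot\eta f\,dS$; together with the symmetric manipulation of $I_3$ this collapses back to the key identity \eqref{pri:2.1} and gives a tautology rather than an estimate.

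The paper avoids this trap entirely: for $I_3$ (and symmetrically $I_4$) it never integrates by parts in a way that sees $\partial\Omega$. Instead it writes $\int q\,\partial_\alpha\eta\,h^\alpha=-\int t\,\partial_t(q\,\partial_\alpha\eta\,h^\alpha)$, so the weight $t$ kills all boundary contributions, and then uses $\Delta q=0$ to trade $\partial_t^2 q$ for tangential second derivatives, which are removed by a further (tangential) integration by parts. Every surviving term is then a product of two square functions with weights $t$ or $t^3$, controlled by \eqref{pri:2.4}--\eqref{pri:2.5} and the Carleson bound on $|\nabla^2\eta|^2 t$. You should replace your plan for $I_3,I_4$ by this ``integrate against $t$'' device; the rest of your outline is fine.
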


\begin{lemma}
Let $(u,q)$ be the solution of $\Delta u = \nabla q$
and $\emph{div}(u) = 0$ in $\mathbb{R}_+^d$
with $(u)^*\in L^2(\mathbb{R}^{d-1})$.
Assume that $\eta\in C^{0,1}(\mathbb{R}^{d-1})$ has compact support, and
$h(x)$ vanishes as $|x|$ goes to infinity.
Then we have
\begin{equation}\label{pri:2.2}
\begin{aligned}
\int_{\mathbb{R}^d_+} q \nabla_\alpha \eta h^\alpha dx^\prime dt
&= -\int_{\mathbb{R}^d_+} t \frac{\partial^2 \eta}{\partial x_\alpha \partial t}  h^\alpha q dx^\prime dt
- \int_{\mathbb{R}^d_+} t\frac{\partial \eta}{\partial x_\alpha}\frac{\partial h^\alpha}{\partial t} q dx^\prime dt \\
& + \frac{1}{2} \int_{\mathbb{R}^d_+} t^2\frac{\partial^2 \eta}{\partial x_\alpha\partial t}\frac{\partial q}{\partial t} h^\alpha dx^\prime dt
+ \frac{1}{2} \int_{\mathbb{R}^d_+} t^2\frac{\partial \eta}{\partial x_\alpha}\frac{\partial q}{\partial t} \frac{\partial h^\alpha}{\partial t} dx^\prime dt \\
& - \frac{1}{2} \int_{\mathbb{R}^d_+} t^2\frac{\partial^2 \eta}{\partial x_\alpha\partial x_i}\frac{\partial q}{\partial x_i} h^\alpha dx^\prime dt
- \frac{1}{2} \int_{\mathbb{R}^d_+} t^2\frac{\partial \eta}{\partial x_\alpha}\frac{\partial q}{\partial x_i} \frac{\partial h^\alpha}{\partial x_i} dx^\prime dt
\end{aligned}
\end{equation}
where $i=1,\cdots,d-1$. Moreover, there holds
\begin{equation}\label{pri:3.1}
\bigg|\int_{\mathbb{R}^d_+} q \nabla_\alpha \eta h^\alpha dx^\prime dt\bigg|
\leq C\|\eta\|_{C^{0,1}(\mathbb{R}^{d-1})}\|u\|_{L^2(\mathbb{R}^{d-1})}\|(h)^*\|_{L^2(\mathbb{R}^{d-1})},
\end{equation}
where $C$ depends only on $d$.
\end{lemma}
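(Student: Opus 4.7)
The plan has two steps: derive the identity $\eqref{pri:2.2}$ by iterated integration by parts in the normal variable $t$, and then bound each of the six resulting terms by Cauchy--Schwarz together with the square-function estimates of Theorem $\ref{thm:2.1}$ and a Carleson-embedding inequality.

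For the identity, I would start from $\int_{\mathbb{R}^d_+} q\,\nabla_\alpha\eta\,h^\alpha\,dx'dt$, write $1 = \partial_t t$, and integrate by parts in $t$. The boundary contributions vanish since $t$ vanishes on $\partial\mathbb{R}^d_+$ and $h$ is assumed to vanish at infinity. Distributing $\partial_t$ via Leibniz onto $q$, $\nabla_\alpha\eta$, and $h^\alpha$ produces precisely the first two terms of $\eqref{pri:2.2}$ along with a remaining piece of weight $t$ carrying $\partial_t q$. That piece is then handled by a second integration by parts, writing $t = \tfrac{1}{2}\partial_t(t^2)$, which yields three new pieces of weight $t^2$: one containing $\partial_t^2 q$ and two that match the third and fourth terms of the identity. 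The key step is the observation that taking the divergence of $\Delta u = \nabla q$ combined with $\text{div}(u) = 0$ forces $\Delta q = 0$ in $\mathbb{R}^d_+$, so $\partial_t^2 q = -\sum_{i=1}^{d-1}\partial_{x_i}^2 q$. Substituting and integrating by parts in each tangential variable $x_i$ (no boundary contributions arise, since $\eta$ has compact support and $x_i$ is tangential) produces the last two terms.

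For the estimate $\eqref{pri:3.1}$, I would work with the harmonic extension of $\eta$ to $\mathbb{R}^d_+$, which satisfies $\|\nabla\eta\|_{L^\infty}\leq C\|\eta\|_{C^{0,1}}$ and the pointwise bound $|\nabla^2\eta(x',t)|\leq C\|\eta\|_{C^{0,1}}/t$, and for which $d\nu := |\nabla^2\eta|^2\,t\,dx'dt$ is a Carleson measure with norm $C\|\eta\|_{C^{0,1}}^2$ (the Fefferman--Stein characterization of $BMO$ applied to $\nabla\eta\in L^\infty$). The remaining ingredients are the half-space analogs of Theorem $\ref{thm:2.1}$ (cf.\ Remark $\ref{re:2.1}$),
\begin{equation*}
\int_{\mathbb{R}^d_+} t|q|^2\,dx'dt \leq C\|u\|_{L^2(\mathbb{R}^{d-1})}^2, \qquad \int_{\mathbb{R}^d_+} t^3|\nabla q|^2\,dx'dt \leq C\|u\|_{L^2(\mathbb{R}^{d-1})}^2,
\end{equation*}
the square-function bound $\int t|\nabla h|^2\,dx'dt \leq C\|(h)^*\|_{L^2}^2$, and the standard Carleson embedding $\int |h|^2\,d\nu \leq C\|\nu\|_{\mathcal{C}}\|(h)^*\|_{L^2}^2$. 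The pairing strategy is as follows: for terms carrying $\nabla^2\eta$, apply Cauchy--Schwarz so that $|\nabla^2\eta|^2\,t\,|h|^2$ lands on one side (controlled by Carleson embedding against $(h)^*$) and $t|q|^2$ or $t^3|\nabla q|^2$ on the other (controlled by Theorem $\ref{thm:2.1}$ in terms of $\|u\|_{L^2}$); for terms carrying only $\nabla\eta$, pull out $\|\nabla\eta\|_{L^\infty}\leq C\|\eta\|_{C^{0,1}}$ and Cauchy--Schwarz between $t|\nabla h|^2$ and the appropriate weighted square function of $q$ or $\nabla q$.

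The main technical obstacle is tracking the $t$-weights so that each Cauchy--Schwarz balances correctly; in particular, the four terms with $t^2\nabla q$ can only be closed using the higher-weight estimate $\int t^3|\nabla q|^2\,dx'dt\leq C\|u\|_{L^2}^2$, which is precisely why Theorem $\ref{thm:2.1}$ is stated with both the $\delta$- and $\delta^3$-weighted square-function bounds for $q$.
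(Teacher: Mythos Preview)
Your proposal is correct and matches the paper's approach almost exactly: the identity is derived by the same two integrations by parts in $t$ followed by the substitution $\partial_t^2 q = -\sum_i \partial_{x_i}^2 q$ and a tangential integration by parts, and the estimate is obtained by the same Cauchy--Schwarz pairings against $\int t|q|^2$, $\int t^3|\nabla q|^2$, $\int t|\nabla h|^2$, and the Carleson embedding for $|h|^2|\nabla^2\eta|^2 t$. The only cosmetic difference is that the paper invokes Lemma~\ref{lemma:2.1} for the extension of $\eta$ rather than the harmonic extension and Fefferman--Stein, but the needed conclusion---that $|\nabla^2\eta|^2\,t\,dx'dt$ is a Carleson measure with norm $\leq C\|\eta\|_{C^{0,1}}^2$---is identical and used in the same way.
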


\begin{proof}
Taking integration by parts with respect to $t$ variable, we have
\begin{equation}\label{f:2.4}
\begin{aligned}
\int_{\mathbb{R}^d_+} q\nabla_\alpha \eta h^\alpha dx^\prime dt
& = -\int_{\mathbb{R}^d_+} t \frac{\partial}{\partial t}
\bigg\{\frac{\partial \eta}{\partial x_\alpha} h^\alpha   q\bigg\} dx^\prime dt \\
& = -\int_{\mathbb{R}^d_+} t\frac{\partial^2 \eta}{\partial x_\alpha\partial t} h^\alpha q dx^\prime dt
-\int_{\mathbb{R}^d_+} t\frac{\partial \eta}{\partial x_\alpha}\frac{\partial h^\alpha}{\partial t} q
dx^\prime dt
-\int_{\mathbb{R}^d_+} t\frac{\partial \eta}{\partial x_\alpha}\frac{\partial q}{\partial t} h^\alpha
dx^\prime dt.
\end{aligned}
\end{equation}
We now turn to calculate the last term in the second line of $\eqref{f:2.4}$, and by the same token,
\begin{equation}\label{f:2.5}
\begin{aligned}
\int_{\mathbb{R}^d_+} t\frac{\partial \eta}{\partial x_\alpha}\frac{\partial q}{\partial t} h^\alpha
dx^\prime dt
&= -\frac{1}{2}
\int_{\mathbb{R}^d_+} t^2
\bigg\{\frac{\partial^2 \eta}{\partial x_\alpha\partial t}\frac{\partial q}{\partial t}
h^\alpha
+ \frac{\partial \eta}{\partial x_\alpha}\frac{\partial q}{\partial t}
\frac{\partial h^\alpha}{\partial t}
+ \frac{\partial \eta}{\partial x_\alpha}\frac{\partial^2 q}{\partial t^2} h^\alpha\bigg\}dx^\prime dt.
\end{aligned}
\end{equation}
Noting that $\Delta q = 0$ in $\Omega$, we have $\frac{\partial^2 q}{\partial t^2}
= \sum_{i=1}^{d-1}\frac{\partial^2 q}{\partial x_i^2}$, and by substituting it into the third term
in the right-hand side of $\eqref{f:2.5}$ leads to
\begin{equation*}
\begin{aligned}
\int_{\mathbb{R}^d_+} t^2\frac{\partial \eta}{\partial x_\alpha}\frac{\partial^2 q}{\partial t^2} h^\alpha
dx^\prime dt
&= - \sum_{i=1}^{d-1}\int_{\mathbb{R}^d_+} t^2\frac{\partial \eta}{\partial x_\alpha}\frac{\partial^2 q}{\partial x_i^2} h^\alpha
dx^\prime dt \\
& = \sum_{i=1}^{d-1}\int_{\mathbb{R}^d_+} t^2
\bigg\{\frac{\partial^2 \eta}{\partial x_\alpha
\partial x_i}\frac{\partial q}{\partial x_i} h^\alpha
+ \frac{\partial \eta}{\partial x_\alpha}\frac{\partial q}{\partial x_i}\frac{\partial h^\alpha}{\partial x_i} \bigg\}dx^\prime dt.
\end{aligned}
\end{equation*}
Then inserting the above formula into $\eqref{f:2.5}$, we arrive at
\begin{equation}\label{f:2.6}
\begin{aligned}
\int_{\mathbb{R}^d_+} t\frac{\partial \eta}{\partial x_\alpha}\frac{\partial q}{\partial t} h^\alpha
dx^\prime dt
= -\frac{1}{2}
\int_{\mathbb{R}^d_+} t^2
&\bigg\{\frac{\partial^2 \eta}{\partial x_\alpha\partial t}\frac{\partial q}{\partial t}
h^\alpha
+ \frac{\partial \eta}{\partial x_\alpha}\frac{\partial q}{\partial t}
\frac{\partial h^\alpha}{\partial t} \\
&+ \frac{\partial^2 \eta}{\partial x_\alpha
\partial x_i}\frac{\partial q}{\partial x_i} h^\alpha
+ \frac{\partial \eta}{\partial x_\alpha}\frac{\partial q}{\partial x_i}\frac{\partial h^\alpha}{\partial x_i}\bigg\}dx^\prime dt.
\end{aligned}
\end{equation}
Up to now, the desired identity $\eqref{pri:2.2}$ follows from
$\eqref{f:2.4}$ and $\eqref{f:2.6}$, and then we turn to estimate $\eqref{pri:3.1}$.

By the identity $\eqref{pri:2.2}$, it is not hard to derive
\begin{equation}\label{f:3.1}
\begin{aligned}
\bigg|\int_{\mathbb{R}^d_+} q \nabla_\alpha \eta h^\alpha dx^\prime dt\bigg|
&\leq C\Big(\int_{\mathbb{R}^d_+}t|\nabla^2 \eta|^2|h|^2dx^\prime dt\Big)^{\frac{1}{2}} \\
& \qquad \times \Bigg\{
\Big(\int_{\mathbb{R}^d_+}t|q|^2dx^\prime dt\Big)^{\frac{1}{2}}
+ \Big(\int_{\mathbb{R}^d_+}t^3|\nabla q|^2dx^\prime dt\Big)^{\frac{1}{2}}
\Bigg\} \\
&+C\big\|\nabla \eta\big\|_{L^\infty(\mathbb{R}^{d-1})}
\Big(\int_{\mathbb{R}^d_+}t|\nabla h|^2dx^\prime dt\Big)^{\frac{1}{2}} \\
&\qquad \times \Bigg\{\Big(\int_{\mathbb{R}^d_+}t|q|^2dx^\prime dt\Big)^{\frac{1}{2}}
+ \Big(\int_{\mathbb{R}^d_+}t^3|\nabla q|^2dx^\prime dt\Big)^{\frac{1}{2}}
\Bigg\}.
\end{aligned}
\end{equation}
In view of Theorem $\ref{thm:2.1}$ and Remark $\ref{re:2.1}$, we have the following estimates
\begin{equation}\label{f:3.2}
\Big(\int_{\mathbb{R}^d_+}t|q|^2dx^\prime dt\Big)^{\frac{1}{2}}
+ \Big(\int_{\mathbb{R}^d_+}t^3|\nabla q|^2dx^\prime dt\Big)^{\frac{1}{2}}
\leq C\Big(\int_{\mathbb{R}^{d-1}}|u|^2 dx^\prime\Big)^{\frac{1}{2}},
\end{equation}
and
\begin{equation}\label{f:3.3}
\Big(\int_{\mathbb{R}^d_+}t|\nabla h|^2dx^\prime dt\Big)^{\frac{1}{2}}
\leq C\Big(\int_{\mathbb{R}^{d-1}}|(h)^*|^2 dx^\prime\Big)^{\frac{1}{2}}.
\end{equation}
Also, it follows from Lemma $\ref{lemma:2.1}$ and \cite[Corollary 7.3.6]{JD} that
\begin{equation}\label{f:3.4}
\Big(\int_{\mathbb{R}^d_+}t|\nabla^2 \eta|^2|h|^2dx^\prime dt\Big)^{\frac{1}{2}}
\leq C\|\eta\|_{C^{0,1}(\mathbb{R}^{d-1})}\Big(\int_{\mathbb{R}^{d-1}}|(h)^*|^2 dS\Big)^{\frac{1}{2}},
\end{equation}
since $|\nabla^2g|tdx^\prime dt$ is the Carleson measure. Consequently,
the desired estimate $\eqref{pri:3.1}$ follows from $\eqref{f:3.1}$,
$\eqref{f:3.2}$, $\eqref{f:3.3}$ and $\eqref{f:3.4}$, and we have completed the proof.
\end{proof}

\begin{remark}
\emph{Although in such the special case $\mathbb{R}^d_+$, the quantity $|\nabla^2\eta|$ may be vanish directly, keeping the term $|\nabla^2\eta|tdx^\prime dt$ suggests where the Carleson measure would be born,
which helps the reader to follow the calculations in later section, easily.}
\end{remark}

\begin{lemma}[Dahlberg's bilinear estimate I]
Let $(h,\pi)\in
H^1(\mathbb{R}_+^d;\mathbb{R}^d)\times L^2(\mathbb{R}_+^d)$ be the solution of
$\Delta h = \nabla \pi$ and $\emph{div}(h) = 0$ in $\mathbb{R}^d_+$, and
$(h)^*\in L^2(\mathbb{R}^{d-1})$. Assume $v=(v_j^\alpha)\in H^1(\mathbb{R}^d_+;\mathbb{R}^{d\times d})$
is supported in $B(0,r_0)$ such that
$v=0$ outside $B(0,r_0)\cap\mathbb{R}_+^d$, where $r_0>0$ is sufficiently large. Then we have
\begin{equation}\label{pri:2.3}
\begin{aligned}
\int_{\mathbb{R}_+^d} \nabla h \cdot v dx
&= \sum_{\alpha =1}^d\sum_{i=1}^{d-1}\int_{\mathbb{R}_+^d}  t\bigg\{\frac{\partial h^\alpha}{\partial t}\frac{\partial v_i^\alpha}{\partial x_i}-\frac{\partial h^\alpha}{\partial x_i}\frac{\partial v_i^\alpha}{\partial t}
-\frac{\partial h^\alpha}{\partial t}\frac{\partial v_d^\alpha}{\partial t}\bigg\}dx^\prime dt\\
& -\sum_{\beta =1}^{d-1}\sum_{i=1}^{d-1}\int_{\mathbb{R}_+^d} t \bigg\{\frac{\partial h^\beta}{\partial x_i}
\frac{\partial v_d^\beta}{\partial x_i}
+ \frac{\partial h^\beta}{\partial t}
\frac{\partial v_d^d}{\partial x_\beta}
-\pi \frac{\partial v_d^\beta}{\partial x_\beta}
\bigg\}
dx^\prime dt,
\end{aligned}
\end{equation}
where $dx=dx^\prime dt$. Moreover, there admits the following estimate
\begin{equation}\label{pri:3.2}
\bigg|\int_{\mathbb{R}_+^d} \nabla h \cdot v dx\bigg|
\leq C\Big(\int_{\mathbb{R}_+^d} |\nabla v|^2 t dx^\prime dt\Big)^{\frac{1}{2}}
\Bigg\{ \int_{\mathbb{R}_+^d} |\nabla h|^2 t dx^\prime dt\Big)^{\frac{1}{2}}
+ \int_{\mathbb{R}_+^d} |\pi|^2 t dx^\prime dt\Big)^{\frac{1}{2}}
\Bigg\}
\end{equation}
where $C$ depends only on $d$.
\end{lemma}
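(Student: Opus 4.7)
The plan is to introduce the weight $t$ by the elementary identity $1=\partial_t t$ and integrate by parts in the normal variable, exploiting that the explicit factor $t$ kills the boundary contribution at $\{t=0\}$, while the compact support of $v$ in $B(0,r_0)\cap\overline{\mathbb{R}_+^d}$ takes care of the remaining boundary terms. I would decompose
\begin{equation*}
\int_{\mathbb{R}_+^d} \nabla h \cdot v\, dx
= \sum_{\alpha=1}^{d}\sum_{i=1}^{d-1}\int_{\mathbb{R}_+^d} \frac{\partial h^\alpha}{\partial x_i}\, v_i^\alpha\, dx
+ \sum_{\alpha=1}^{d}\int_{\mathbb{R}_+^d} \frac{\partial h^\alpha}{\partial t}\, v_d^\alpha\, dx
\end{equation*}
into a tangential piece (indices $i<d$) and a normal piece (index $d$), and treat them separately.

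For the tangential piece, one application of the transfer $\int F\, dx = -\int t\,\partial_t F\, dx$ followed by an integration by parts in $x_i$ rewrites each summand as $\int t\,(\partial_t h^\alpha)(\partial_{x_i} v_i^\alpha)\, dx - \int t\,(\partial_{x_i} h^\alpha)(\partial_t v_i^\alpha)\, dx$, which accounts for the first two groups of terms on the first line of \eqref{pri:2.3}. For the normal piece, the same transfer produces $-\int t\,(\partial_t^2 h^\alpha) v_d^\alpha\, dx - \int t\,(\partial_t h^\alpha)(\partial_t v_d^\alpha)\, dx$; the second integral is already the third group on the first line. The remaining step is to eliminate $\partial_t^2 h^\alpha$ using the Stokes PDE, and here I would split on $\alpha$. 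When $\alpha=\beta\in\{1,\ldots,d-1\}$, the relation $\Delta h^\beta=\partial_{x_\beta}\pi$ gives $\partial_t^2 h^\beta=\partial_{x_\beta}\pi-\sum_{i<d}\partial_{x_i}^2 h^\beta$; an integration by parts in $x_\beta$ and in $x_i$ then produces the $\pi\,\partial_{x_\beta} v_d^\beta$ and $\partial_{x_i} h^\beta\,\partial_{x_i} v_d^\beta$ contributions on the second line. When $\alpha=d$, I would instead use the divergence-free condition $\partial_t h^d=-\sum_{\beta<d}\partial_{x_\beta} h^\beta$, differentiate once more in $t$ to obtain $\partial_t^2 h^d=-\sum_{\beta<d}\partial^2_{x_\beta t} h^\beta$, and integrate by parts in $x_\beta$ to produce the $\partial_t h^\beta\,\partial_{x_\beta} v_d^d$ term. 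Assembling all pieces yields \eqref{pri:2.3}.

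The bound \eqref{pri:3.2} then follows from \eqref{pri:2.3} by a direct application of the Cauchy--Schwarz inequality to each summand: every term on the right is of the form $\int t\,(\text{first-order in } h\text{ or }\pi)\cdot(\text{first-order in } v)\, dx$, and the split $t=\sqrt{t}\cdot\sqrt{t}$ distributes the weighted $L^2$ norms exactly as required. I do not foresee any delicate analytic subtlety; the chief obstacle is bookkeeping, namely managing the case split $\alpha<d$ versus $\alpha=d$ when removing $\partial_t^2 h^\alpha$, and verifying that the boundary contributions at $\{t=0\}$, at $t\to\infty$, and at spatial infinity really vanish. The factor $t$ disposes of $\{t=0\}$, and the compact support of $v$ trivially disposes of the others, so the computation reduces to careful tracking of indices.
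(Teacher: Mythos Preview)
Your proposal is correct and follows essentially the same route as the paper: the same tangential/normal split, the same ``insert $1=\partial_t t$ and integrate by parts'' device, and the same use of $\Delta h^\beta=\partial_{x_\beta}\pi$ for $\beta<d$ together with $\operatorname{div}h=0$ for $\alpha=d$, followed by Cauchy--Schwarz with the weight split $t=\sqrt{t}\cdot\sqrt{t}$. The only cosmetic difference is that for $\alpha=d$ the paper invokes $\partial_t h^d=-\sum_{\beta<d}\partial_{x_\beta}h^\beta$ \emph{before} transferring the weight $t$, whereas you first transfer and then differentiate the divergence-free relation to get $\partial_t^2 h^d=-\sum_{\beta<d}\partial_{x_\beta t}^2 h^\beta$; the two orderings produce the same terms after one more integration by parts in $x_\beta$.
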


\begin{proof}
For ease of statement, let $i,\beta = 1,\cdots,d-1$, and $j,\alpha = 1,\cdots,d$, and the summation convention for repeated indices will be used. We divide the left-hand side of $\eqref{pri:2.3}$ into
two parts as follows:
\begin{equation}\label{f:2.10}
\int_{\mathbb{R}^d_+} \frac{\partial h^\alpha}{\partial x_j} v_j^\alpha dx^\prime dt
= \int_{\mathbb{R}^d_+} \frac{\partial h^\alpha}{\partial x_i} v_i^\alpha dx^\prime dt
+ \int_{\mathbb{R}^d_+} \frac{\partial h^\alpha}{\partial t} v_d^\alpha dx^\prime dt
= I_1 + I_2.
\end{equation}
The simple part is $I_1$, and taking integration by parts with respect to $t$, we have
\begin{equation}\label{f:2.9}
\begin{aligned}
I_1 &= -\int_{\mathbb{R}^d_+}t\frac{\partial}{\partial t}\bigg\{\frac{\partial h^\alpha}{\partial x_i}
v_i^\alpha\bigg\} dx^\prime dt
= -\int_{\mathbb{R}^d_+}t\frac{\partial^2 h^\alpha}{\partial x_i\partial t}
v_i^\alpha dx^\prime dt
-\int_{\mathbb{R}^d_+}t\frac{\partial h^\alpha}{\partial x_i}
\frac{\partial v_i^\alpha}{\partial t} dx^\prime dt\\
& = \int_{\mathbb{R}^d_+}t\frac{\partial h^\alpha}{\partial t}
\frac{\partial v_i^\alpha}{\partial x_i} dx^\prime dt
-\int_{\mathbb{R}^d_+}t\frac{\partial h^\alpha}{\partial x_i}
\frac{\partial v_i^\alpha}{\partial t} dx^\prime dt.
\end{aligned}
\end{equation}
To handle $I_2$, we write
\begin{equation*}
\begin{aligned}
I_2 &= \int_{\mathbb{R}^d_+} \frac{\partial h^\beta}{\partial t} v_d^\beta dx^\prime dt
+ \int_{\mathbb{R}^d_+} \frac{\partial h^d}{\partial t} v_d^d dx^\prime dt \\
& = -\int_{\mathbb{R}^d_+} t\frac{\partial}{\partial t}\bigg\{\frac{\partial h^\beta}{\partial t} v_d^\beta \bigg\} dx^\prime dt
- \int_{\mathbb{R}^d_+} \frac{\partial h^\beta}{\partial x_\beta} v_d^d dx^\prime dt
= I_{21} + I_{22}.
\end{aligned}
\end{equation*}
We mention that the second equality above follows from the fact that $\text{div}(h)=0$ in $\mathbb{R}_+^d$. For $I_{21}$, we indeed employ $\Delta h =\nabla\pi$ in $\mathbb{R}_+^d$
to compute the integral
\begin{equation*}
\begin{aligned}
\int_{\mathbb{R}^d_+} t\frac{\partial^2 h^\beta}{\partial t^2} v^\beta_d dx^\prime dt
&= - \int_{\mathbb{R}^d_+} t\frac{\partial^2 h^\beta}{\partial x_i^2} v^\beta_d dx^\prime dt
+ \int_{\mathbb{R}^d_+} t \nabla_\beta\pi v_d^\beta dx^\prime dt \\
& = \int_{\mathbb{R}^d_+} t\frac{\partial h^\beta}{\partial x_i} \frac{v^\beta_d}{\partial x_i}
dx^\prime dt
- \int_{\mathbb{R}^d_+} t \pi \frac{\partial v_d^\beta}{\partial x_\beta} dx^\prime dt
\end{aligned}
\end{equation*}
where we use the integration by parts with respect to $x_i$ in the second equality.
Note that all of $\frac{\partial}{\partial x_i}$ and $\frac{\partial}{\partial x_\beta}$
are tangential derivative. The core idea is that using tangential derivatives control the conormal derivative. Thus, we have
\begin{equation}\label{f:2.7}
I_{21} = -\int_{\mathbb{R}^d_+} t\frac{\partial h^\beta}{\partial x_i} \frac{\partial v^\beta_d}{\partial x_i}
dx^\prime dt
-\int_{\mathbb{R}^d_+} t\frac{\partial h^\beta}{\partial t} \frac{\partial v^\beta_d}{\partial t}
dx^\prime dt
+ \int_{\mathbb{R}^d_+} t \pi \frac{\partial v_d^\beta}{\partial x_\beta} dx^\prime dt.
\end{equation}
Proceeding as in the proof of $I_1$, we have
\begin{equation}\label{f:2.8}
\begin{aligned}
I_{22} &= \int_{\mathbb{R}^d_+} t\frac{\partial}{\partial t}\bigg\{\frac{\partial h^\beta}{\partial x_\beta} v_d^d \bigg\}dx^\prime dt
& = -\int_{\mathbb{R}^d_+} t\frac{\partial h^\beta}{\partial t} \frac{\partial v_d^d}{\partial x_\beta}dx^\prime dt
+ \int_{\mathbb{R}^d_+} t\frac{\partial h^\beta}{\partial x_\beta} \frac{\partial v_d^d}{\partial t}dx^\prime dt.
\end{aligned}
\end{equation}
Combining equalities $\eqref{f:2.7}$ and $\eqref{f:2.8}$ leads to
\begin{equation*}
I_2 = -\int_{\mathbb{R}^d_+} t\frac{\partial h^\beta}{\partial x_i} \frac{v^\beta_d}{\partial x_i}
dx^\prime dt
-\int_{\mathbb{R}^d_+} t\frac{\partial h^\beta}{\partial t} \frac{\partial v_d^d}{\partial x_\beta}dx^\prime dt
+ \int_{\mathbb{R}^d_+} t \pi \frac{\partial v_d^\beta}{\partial x_\beta} dx^\prime dt
-\int_{\mathbb{R}^d_+} t\frac{\partial h^\alpha}{\partial t} \frac{\partial v^\alpha_d}{\partial t}
dx^\prime dt
\end{equation*}
where we use the fact that $\text{div}(h)=0$ in $\mathbb{R}_+^d$, again. This together with
$\eqref{f:2.9}$ and $\eqref{f:2.10}$ gives the desired identity $\eqref{pri:2.3}$.

Then we take the last term in the right-hand side of identity $\eqref{pri:2.3}$ as an example:
\begin{equation*}
\bigg|\int_{\mathbb{R}^d_+} t \pi \frac{\partial v_d^\beta}{\partial x_\beta} dx^\prime dt\bigg|
\leq \int_{\mathbb{R}^d_+} t^{\frac{1}{2}}|\pi| |\nabla v|t^{\frac{1}{2}} dx^\prime dt
\leq \Big(\int_{\mathbb{R}^d_+} |\pi|^2t dx^\prime dt\Big)^{\frac{1}{2}}
\Big(\int_{\mathbb{R}^d_+} |\nabla v|^2t dx^\prime dt\Big)^{\frac{1}{2}}
\end{equation*}
where we employ Cauchy's inequality in the last step.
The desired estimate $\eqref{pri:3.2}$ simply follows from the same manner and
we have completed the proof.
\end{proof}

\begin{flushleft}
\textbf{Proof of Theorem \ref{thm:3.1}.}
In view of the identity $\eqref{pri:2.1}$, to estimate the quantity
$\|\Lambda(\eta f)-\eta\Lambda(f)\|_{L^2(\mathbb{R}^{d-1})}$, it is reduced to control
\end{flushleft}
\vspace{-0.2cm}
\begin{equation}\label{f:3.5}
\bigg|\int_{\mathbb{R}^d_+} u^\alpha \nabla \eta \nabla h^\alpha dx\bigg|
+ \bigg|\int_{\mathbb{R}^d_+} \nabla u^\alpha \nabla \eta h^\alpha dx\bigg|
+ \bigg|\int_{\mathbb{R}^d_+} q\nabla_\alpha \eta  h^\alpha dx\bigg|
+ \bigg|\int_{\mathbb{R}^d_+} \pi\nabla_\alpha \eta u^\alpha dx\bigg|.
\end{equation}

For the first term in $\eqref{f:3.5}$, choose $v_i^\alpha = u^\alpha\nabla_i \eta$ with
$i,\alpha=1,\cdots,d$, and it follows from the Dahlberg's bilinear estimate $\eqref{pri:3.2}$ that
\begin{equation}\label{f:3.6}
\begin{aligned}
\bigg|\int_{\mathbb{R}^d_+} u^\alpha \nabla \eta \nabla h^\alpha dx\bigg|
&\leq C\Bigg\{\Big(\int_{\mathbb{R}_+^d} |u|^2|\nabla^2 \eta|^2 t dx^\prime dt\Big)^{\frac{1}{2}}
+\|\eta\|_{C^{0,1}(\mathbb{R}^{d-1})}\Big(\int_{\mathbb{R}_+^d} |\nabla u|^2 t dx^\prime dt\Big)^{\frac{1}{2}}
\Bigg\}\\
&\qquad \times\Bigg\{ \int_{\mathbb{R}_+^d} |\nabla h|^2 t dx^\prime dt\Big)^{\frac{1}{2}}
+ \int_{\mathbb{R}_+^d} |\pi|^2 t dx^\prime dt\Big)^{\frac{1}{2}}
\Bigg\} \\
&\leq C\big\|\eta\big\|_{C^{0,1}(\mathbb{R}^{d-1})}\Big(\int_{\mathbb{R}^{d-1}} |(u)^*|^2 dx^\prime\Big)^{\frac{1}{2}}
\Bigg\{ \int_{\mathbb{R}_+^d} |(h)^*|^2 dx^\prime dt\Big)^{\frac{1}{2}}
+ \int_{\mathbb{R}_+^d} |h|^2 dx^\prime dt\Big)^{\frac{1}{2}}
\Bigg\}\\
&\leq C\big\|\eta\big\|_{C^{0,1}(\mathbb{R}^{d-1})}\big\|f\big\|_{L^2(\mathbb{R}^{d-1})}\big\|h\big\|_{L^2(\mathbb{R}^{d-1})}.
\end{aligned}
\end{equation}
In the second inequality, we employ the estimates $\eqref{pri:2.4}$ and $\eqref{pri:2.5}$, as well as
Lemma $\ref{lemma:2.1}$ coupled with \cite[Corollary 7.3.6]{JD}. In the last one, we use the nontangential maximal
function estimates
$\|(u)^*\|_{L^2(\mathbb{R}^{d-1})}\leq C\|f\|_{L^2(\mathbb{R}^{d-1})}$ and
$\|(h)^*\|_{L^2(\mathbb{R}^{d-1})}\leq C\|h\|_{L^2(\mathbb{R}^{d-1})}$ (see \cite[Theorem 3.9]{EBFCEKGCV}).

The second one in $\eqref{f:3.5}$ obeys the same procedure. It suffices to choose
$v_i^\alpha = \nabla_i \eta h^\alpha$, and it is not hard to see that
\begin{equation}\label{f:3.7}
\begin{aligned}
\bigg|\int_{\mathbb{R}^d_+} \nabla u^\alpha \nabla \eta h^\alpha dx\bigg|
&\leq C\Bigg\{\Big(\int_{\mathbb{R}_+^d} |h|^2|\nabla^2 \eta|^2 t dx^\prime dt\Big)^{\frac{1}{2}}
+\|\eta\|_{C^{0,1}(\mathbb{R}^{d-1})}\Big(\int_{\mathbb{R}_+^d} |\nabla h|^2 t dx^\prime dt\Big)^{\frac{1}{2}}
\Bigg\}\\
&\qquad \times\Bigg\{ \int_{\mathbb{R}_+^d} |\nabla u|^2 t dx^\prime dt\Big)^{\frac{1}{2}}
+ \int_{\mathbb{R}_+^d} |q|^2 t dx^\prime dt\Big)^{\frac{1}{2}}
\Bigg\} \\
&\leq C\|\eta\|_{C^{0,1}(\mathbb{R}^{d-1})}\Big(\int_{\mathbb{R}^{d-1}} |(h)^*|^2 dx^\prime\Big)^{\frac{1}{2}}
\Bigg\{ \int_{\mathbb{R}_+^d} |(u)^*|^2 dx^\prime dt\Big)^{\frac{1}{2}}
+ \int_{\mathbb{R}_+^d} |u|^2 dx^\prime dt\Big)^{\frac{1}{2}}
\Bigg\}\\
&\leq C\big\|\eta\big\|_{C^{0,1}(\mathbb{R}^{d-1})}
\big\|f\big\|_{L^2(\mathbb{R}^{d-1})}\big\|h\big\|_{L^2(\mathbb{R}^{d-1})}.
\end{aligned}
\end{equation}

Proceeding as in the proof of the estimate $\eqref{pri:3.1}$, we obtain
\begin{equation}\label{f:3.8}
\begin{aligned}
\bigg|\int_{\mathbb{R}^d_+} \pi \nabla_\alpha \eta u^\alpha dx^\prime dt\bigg|
&\leq C\|\eta\|_{C^{0,1}(\mathbb{R}^{d-1})}\|(u)^*\|_{L^2(\mathbb{R}^{d-1})}\|h\|_{L^2(\mathbb{R}^{d-1})}\\
&\leq C\|\eta\|_{C^{0,1}(\mathbb{R}^{d-1})}\|f\|_{L^2(\mathbb{R}^{d-1})}\|h\|_{L^2(\mathbb{R}^{d-1})}.
\end{aligned}
\end{equation}
Note that $(u,q)$ satisfies the equation $\eqref{pde:1.1}$. Thus, plugging the estimates
$\eqref{f:3.6}$, $\eqref{f:3.6}$, $\eqref{f:3.1}$ and $\eqref{f:3.6}$ back into $\eqref{f:3.5}$ leads
to the desired estimate $\eqref{pri:3.3}$, and we have completed the proof.
\qed

\section{The proof of Theorem $\ref{thm:1.1}$}

\begin{lemma}[Dahlberg's bilinear estimate II]\label{lemma:4.1}
Let $(h,\pi)\in
H^1(\Omega;\mathbb{R}^d)\times L^2(\Omega)$ be the solution of
$\Delta h = \nabla \pi$ and $\emph{div}(h) = 0$ in $\Omega$, and
$(h)^*\in L^2(\partial\Omega)$. Assume
Then for any  $v=(v_j^\alpha)\in H^1(\Omega;\mathbb{R}^{d\times d})$, we have
\begin{equation}\label{pri:4.1}
\begin{aligned}
\bigg|\int_{\Omega} \nabla h \cdot v dx\bigg|
&\leq C
\Bigg\{ \Big(\int_{\Omega} |\nabla h|^2 \delta(x) dx\Big)^{\frac{1}{2}}
+ \Big(\int_{\Omega} |\pi|^2 \delta(x) dx\Big)^{\frac{1}{2}}
\Bigg\} \\
&\qquad\qquad\qquad\times
\Bigg\{\Big(\int_{\Omega} |\nabla v|^2 \delta(x) dx\Big)^{\frac{1}{2}}
+
\Big(\int_{\partial\Omega} |(v)^*|^2 dS\Big)^{\frac{1}{2}}
\Bigg\}
\end{aligned}
\end{equation}
where $\delta(x)=\emph{dist}(x,\partial\Omega)$, and $C$ depends only on $d$ and $\Omega$.
\end{lemma}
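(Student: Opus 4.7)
The plan is to lift the half-space identity of Lemma 3.4 (specifically the identity (3.7) and the bound (3.8)) to a general bounded Lipschitz domain by localizing near $\partial\Omega$ and running essentially the same integration-by-parts scheme with a Lipschitz pseudo-distance in place of $t$. Since the right-hand side of (4.1) has the same weighted-$L^2$ structure as (3.8), once the local geometry is set up correctly the rest of the argument should be essentially algebraic.

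Concretely, I would cover a neighborhood of $\partial\Omega$ by finitely many boundary balls $\{B_k\}_{k=1}^N$ such that, after a rigid motion, $\Omega\cap B_k=\{x_d>\phi_k(x')\}$ for a Lipschitz function $\phi_k$, and pick a subordinate $C^\infty$ partition of unity $\{\zeta_k\}_{k=0}^N$ with $\zeta_0$ supported in the interior region $\{\delta\geq\epsilon_0\}$. Write $\int_\Omega\nabla h\cdot v\,dx=\sum_k\int_\Omega\nabla h\cdot(\zeta_k v)\,dx$. The interior piece $k=0$ is controlled by Cauchy--Schwarz together with the interior regularity estimate $|\nabla h(x)|\leq C\delta(x)^{-1}\big(\dashint_{B(x,\delta/2)}|h|^2\big)^{1/2}$ and the standard bound $\|h\|_{L^2(\Omega)}\leq C\|(h)^*\|_{L^2(\partial\Omega)}$. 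On each boundary ball, set the pseudo-distance $\rho_k(x)=x_d-\phi_k(x')$, which is Lipschitz, vanishes on $\partial\Omega\cap B_k$, is comparable to $\delta$, and satisfies $\partial_{x_d}\rho_k=1$. Mirror the half-space identity (3.7) step by step: integrate by parts in $x_d$ using $1=\partial_{x_d}\rho_k$ to bring out a $\rho_k$-weight (boundary terms vanish because $\rho_k=0$ on $\partial\Omega\cap B_k$ and $\zeta_k$ kills the lateral boundary of $B_k$), invoke $\Delta h=\nabla\pi$ to trade $\partial_{x_d}^2 h^\beta$ for tangential second derivatives plus $\partial_\beta\pi$, and use $\text{div}(h)=0$ to rewrite $\partial_{x_d}h^d$. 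The result is the Lipschitz analog of (3.7), yielding a Cauchy--Schwarz bound of the shape (3.8) after one appeal to Theorem 2.1.

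The main obstacle is that, unlike the flat case, $\partial_{x_i}\rho_k=-\partial_i\phi_k\neq 0$ for $i<d$, so each IBP in a tangential direction produces extra terms of the form $\int(\partial_i\phi_k)\partial_{x_d}h^\alpha v_i^\alpha\,dx$ that carry no explicit $\rho_k$-weight. I would restore the weighted structure by performing a second $x_d$-IBP against $1=\partial_{x_d}\rho_k$ and re-applying $\Delta h=\nabla\pi$; the second-order quantity $\partial_j\partial_i\phi_k$ that eventually appears on further tangential IBP is handled by replacing $\phi_k$ with a Stein-mollified approximation $\phi_k^*$ at scale $\delta$, so that $|\nabla^2\phi_k^*|^2\delta\,dx$ becomes a Carleson measure on $\Omega\cap B_k$ in the sense of Lemma 2.1, while the Lipschitz discrepancy $\phi_k-\phi_k^*$ is absorbed through the $L^\infty$ bound on $\nabla\phi_k$. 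Summing over $k$, combining with the interior piece, and using Theorem 2.1 and Lemma 2.1 to convert the weighted squared integrals back to the form on the right-hand side of (4.1) completes the proof.
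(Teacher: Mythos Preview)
Your localization, partition of unity, and interior estimate are all fine, and your instinct to work with a Lipschitz pseudo-distance $\rho_k=x_d-\phi_k(x')$ is close to what the paper does. But the scheme you describe does not close. After your ``second $x_d$-IBP and re-application of $\Delta h=\nabla\pi$'' on the unweighted term $\int(\partial_i\phi_k)\,\partial_{x_d}h^\alpha\,v_i^\alpha$, the subsequent tangential integration by parts again hits $\partial_{x_j}\rho_k=-\partial_j\phi_k$ and produces a new unweighted term of exactly the same type, now carrying an extra factor $|\nabla'\phi_k|^2$. Iterating only generates a geometric series in $\|\nabla'\phi_k\|_{L^\infty}$, which you cannot sum for a general Lipschitz graph. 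Your mollification of $\phi_k$ addresses the Carleson control of $\nabla^2\phi_k$ but does nothing for this recurrence.

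The paper breaks the loop by first passing to $\mathbb{R}^d_+$ via the Kenig--Stein bi-Lipschitz map $\rho(x',t)=(x',\,c_0t+\zeta_t\!*\!\psi(x'))$. On $\mathbb{R}^d_+$ the weight is $t$, independent of $x'$, so tangential integration by parts no longer manufactures unweighted remainders. The recurrence reappears through the chain rule in the transformed equation, but now it can be closed algebraically in one step: from $\Delta h=\nabla\pi$ one derives
\[
\bigl(1+|\nabla_{x'}\varphi|^2\bigr)\,\frac{\partial^2 h^\alpha}{\partial y^2}\!\circ\!\rho
=-\frac{\partial}{\partial x_k}\Bigl\{\frac{\partial h^\alpha}{\partial x_k}\!\circ\!\rho\Bigr\}
+\frac{\partial}{\partial x_k}\Bigl\{\frac{\partial h^\alpha}{\partial y}\!\circ\!\rho\Bigr\}\frac{\partial\varphi}{\partial x_k}
+\nabla_\alpha\pi\!\circ\!\rho,
\]
and dividing by the strictly positive factor $1+|\nabla_{x'}\varphi|^2$ before the next tangential IBP terminates the process; the derivatives that land on $(1+|\nabla_{x'}\varphi|^2)^{-1}$ produce $\nabla^2\varphi$, which is exactly the Carleson measure supplied by the Kenig--Stein construction. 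The pressure derivative $\nabla_\alpha\pi$ then needs its own closure, handled the same way using $\Delta\pi=0$ and a further $t$-IBP to reach a $t^3|\nabla\pi|^2$ integral controlled by Theorem~2.3. Your outline omits both the algebraic division step and the separate treatment of the pressure; without them the argument does not terminate.
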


\begin{proof}
By linear transformation both in the variable $x$ and the solution $(h,\pi)$, we may assume that
\begin{equation*}
 D_r=\Omega\cap B(P,r) = \big\{(x^\prime,y)\in\mathbb{R}^{d}:y>\psi(x^\prime)\big\}\cap B(P,r).
\end{equation*}
where $\psi$ is a Lipschitz function on $\mathbb{R}^{d-1}$. Let $\eta\in C_0^\infty(B(P,2r))$
be a cut-off function such that $\eta =1$ in $B(P,r)$. Since $\Delta h = \nabla\pi$, and
$\text{div}(h)=0$ in $\Omega$, it is not hard to derive
\begin{equation*}
\Delta(\eta h) = \nabla(\eta\pi) - \tilde{f},
\quad \text{and}\quad \text{div}(\eta h) = h\cdot\nabla\eta
\qquad \text{in}~\Omega,
\end{equation*}
where $\tilde{f} = \pi\nabla\eta-2\nabla h\cdot\nabla\eta-h\Delta\eta$.
Thus it is enough to establish $\eqref{pri:4.1}$ with $\Omega$ replaced by $D_r$, assuming that $\Delta h = \nabla\pi$ and $\text{div}(h) = 0$ in $D_r$, and $v\in H^1_0(B(0,r))$.
Furthermore, since the Carleson measure is translation and rotation invariant, it is fine to assume
$P=0$.

By a special change of variables invented by C. Kenig and E. Stein, we may further reduce the problem
to the case of upper half-space $\mathbb{R}^d_+$. Indeed, let $\rho:\mathbb{R}_+^d\to D=D_r$ be defined
by
\begin{equation}\label{eq:4.1}
\rho(x^\prime,t) = (x^\prime,y) = (x^\prime,\varphi(x,t)) = (x^\prime,c_0t+\zeta_t\ast\psi(x^\prime)),
\end{equation}
where $\zeta_t(x^\prime) = t^{1-d}\zeta(x^\prime/t)$ is a smooth compactly supported bump function and
the constant $c_0 = c_0(d,\|\nabla\psi\|_{L^\infty(\mathbb{R}^{d-1})})$ is sufficient large such that
$\frac{\partial\varphi}{\partial t}\geq \frac{1}{8}$. The map
$\rho$ is a bi-Lipschitz map, which owns two essential properties: (1)
there exists two constant $c,C>0$ such that $c\leq |\nabla\rho(x^\prime,t)|\leq C$; and (2)
$|\nabla^2\varphi(x^\prime,t)|^2tdx^\prime dt$ (or in another form
$|\nabla^2\rho(x^\prime,t)|^2tdx^\prime dt$) is a Carleson measure on $\mathbb{R}_+^d$; Hence, the
property (1) and
\begin{equation*}
\int_{D} \nabla h \cdot v dx = \int_{\mathbb{R}^{d}_+}\nabla h\circ\rho \cdot v\circ\rho |\nabla\rho| dx^\prime dt
\end{equation*}
indicate that it suffices to show
\begin{equation}\label{f:4.1}
\begin{aligned}
\bigg|\int_{\mathbb{R}^d_+} \nabla h\circ\rho \cdot v\circ\rho |\nabla\rho| dx^\prime dt\bigg|
&\leq C
\Bigg\{ \Big(\int_{\mathbb{R}^d_+} |\nabla h\circ\rho|^2 t dx^\prime dt\Big)^{\frac{1}{2}}
+ \int_{\mathbb{R}^d_+} |\pi\circ\rho|^2 t dx^\prime dt\Big)^{\frac{1}{2}}
\Bigg\} \\
&\times
\Bigg\{\Big(\int_{\mathbb{R}^d_+} |\nabla v\circ\rho|^2 t dx^\prime dt\Big)^{\frac{1}{2}}
+
\Big(\int_{\mathbb{R}^{d-1}} |(v\circ\rho)^*|^2 dx^\prime\Big)^{\frac{1}{2}}
\Bigg\}.
\end{aligned}
\end{equation}

The remainder of the argument is analogous to that in Theorem $\ref{thm:3.1}$, and we only focus on the
different places. Let $1\leq i,k\leq d-1$ and $1\leq \alpha\leq d$.
and we first divide the integral in the left-hand side of $\eqref{f:4.1}$ into
two parts
\begin{equation}
\begin{aligned}
\int_{\mathbb{R}^d_+} \nabla h\circ\rho \cdot v\circ\rho |\nabla\rho| dx^\prime dt
&= \int_{\mathbb{R}^d_+} \frac{\partial h^\alpha}{\partial x_i}\circ\rho \cdot v_i^\alpha\circ\rho |\nabla\rho| dx^\prime dt
+ \int_{\mathbb{R}^d_+} \frac{\partial h^\alpha}{\partial y}\circ\rho \cdot v_d^\alpha\circ\rho |\nabla\rho| dx^\prime dt\\
& := A_1 + A_2.
\end{aligned}
\end{equation}
Unlike the proof of Theorem $\ref{thm:3.1}$, the difficulty has already appeared in calculating $A_1$,
and it follows that
\begin{equation}
\begin{aligned}
A_1 & = -\int_{\mathbb{R}^d_+} t\frac{\partial^2 h^\alpha}{\partial x_i\partial y}\circ\rho\frac{\partial\varphi}{\partial t}\cdot v_i^\alpha\circ\rho |\nabla\rho| dx^\prime dt
- \int_{\mathbb{R}^d_+} t\frac{\partial h^\alpha}{\partial x_i}\circ\rho\cdot \frac{\partial v_i^\alpha}{\partial y}\circ\rho\frac{\partial\varphi}{\partial t} |\nabla\rho| dx^\prime dt \\
&\qquad\qquad-\int_{\mathbb{R}^d_+} t\frac{\partial h^\alpha}{\partial x_i}\circ\rho\cdot v_i^\alpha\circ\rho \frac{\nabla\rho}{|\nabla\rho|}\cdot\frac{\partial}{\partial t}\big(\nabla\rho\big) dx^\prime dt
:= B_1 + B_2 + B_3
\end{aligned}
\end{equation}
Noting that $B_2$ is a good term, we have
\begin{equation}\label{f:4.11}
|B_2|\leq C\int_{\mathbb{R}^d_+} t\Big|\frac{\partial h^\alpha}{\partial x_i}\circ\rho\Big|\cdot \Big|\frac{\partial v_i^\alpha}{\partial y}\circ\rho\Big| dx^\prime dt
\leq C
\Big(\int_{\mathbb{R}^d_+} |\nabla h\circ\rho|^2 t dx^\prime dt\Big)^{\frac{1}{2}}
\Big(\int_{\mathbb{R}^d_+} |\nabla v\circ\rho|^2 t dx^\prime dt\Big)^{\frac{1}{2}}.
\end{equation}
Before studying $A_1$, we point out that $|A_3|$ will produce a Carleson measure
$|\nabla^2\rho|^2tdx^\prime dt$, and we will see that
\begin{equation}\label{f:4.12}
\begin{aligned}
|B_3| &\leq C\Big(\int_{\mathbb{R}^d_+} |\nabla h\circ\rho|^2 t dx^\prime dt\Big)^{\frac{1}{2}}
\Big(\int_{\mathbb{R}^d_+} |v\circ\rho|^2 |\nabla^2\rho|t dx^\prime dt\Big)^{\frac{1}{2}} \\
&\leq C\Big(\int_{\mathbb{R}^d_+} |\nabla h\circ\rho|^2 t dx^\prime dt\Big)^{\frac{1}{2}}
\Big(\int_{\mathbb{R}^{d-1}} |(v\circ\rho)^*|^2 dx^\prime\Big)^{\frac{1}{2}},
\end{aligned}
\end{equation}
where we employ \cite[Corollary 7.3.6]{JD} in the last inequality. In this sense, the factor
$|\nabla\rho|$ (or $\nabla\varphi$) is good in the left-hand side of $\eqref{f:4.1}$, which actually may
produce a Carleson measure in the integral.

To estimate $|B_1|$, we need to move the derivative $\partial/\partial x_i$ of
$\frac{\partial^2 h^\alpha}{\partial x_i\partial y}$ to other terms through integration by parts.
Plugging the following identity
\begin{equation*}
\frac{\partial^2 h^\alpha}{\partial x_i\partial y}\circ\rho
= \frac{\partial}{\partial x_i}\Big\{\frac{\partial h^\alpha}{\partial y}\circ\rho\Big\}
- \frac{\partial^2 h^\alpha}{\partial y^2}\circ\rho\frac{\partial\varphi}{\partial x_i}
\end{equation*}
back into $B_1$, the second term in the right-hand side above will bring the real difficulty, and
we merely study
\begin{equation}\label{f:4.2}
\int_{\mathbb{R}^d_+} t\frac{\partial^2 h^\alpha}{\partial y^2}\circ\rho
\frac{\partial\varphi}{\partial x_i}
\frac{\partial\varphi}{\partial t}
\cdot v_i^\alpha\circ\rho |\nabla\rho| dx^\prime dt := E_1.
\end{equation}
Since $\Delta h^\alpha = \nabla_\alpha$ in $D$,
it is not hard to derive that
\begin{equation*}
\begin{aligned}
\frac{\partial^2 h^\alpha}{\partial y^2}\circ\rho
&= - \sum_{k=1}^{d-1}\frac{\partial^2 h^\alpha}{\partial x_k^2}\circ\rho + \nabla_\alpha\pi\circ\rho \\
& = -\frac{\partial}{\partial x_k}\Big\{\frac{\partial h^\alpha}{\partial x_k}\circ\rho\Big\}
+ \frac{\partial^2 h^\alpha}{\partial x_k\partial y}\circ\rho\frac{\partial\varphi}{\partial x_k}
+ \nabla_\alpha\pi\circ\rho \\
& = -\frac{\partial}{\partial x_k}\Big\{\frac{\partial h^\alpha}{\partial x_k}\circ\rho\Big\}
+ \frac{\partial}{\partial x_k}\Big\{\frac{\partial h^\alpha}{\partial y}\circ\rho\Big\}
\frac{\partial\varphi}{\partial x_k}
- \frac{\partial^2 h^\alpha}{\partial y^2}\circ\rho |\nabla_{x^\prime}\varphi|^2
+ \nabla_\alpha\pi\circ\rho,
\end{aligned}
\end{equation*}
where $|\nabla_{x^\prime}\varphi|^2 = \sum_{k=1}^{d-1}(\partial\varphi/\partial x_k)^2$
and $\nabla_{x^\prime} = (\partial_1,\cdots,\partial_{d-1})$, and this implies
\begin{equation}\label{f:4.3}
\big(1+|\nabla_{x^\prime}\varphi|^2\big)\frac{\partial^2 h^\alpha}{\partial y^2}\circ\rho
= -\frac{\partial}{\partial x_k}\Big\{\frac{\partial h^\alpha}{\partial x_k}\circ\rho\Big\}
+ \frac{\partial}{\partial x_k}\Big\{\frac{\partial h^\alpha}{\partial y}\circ\rho\Big\}
\frac{\partial\varphi}{\partial x_k}
+ \nabla_\alpha\pi\circ\rho.
\end{equation}
Then inserting the identity $\eqref{f:4.3}$ into $\eqref{f:4.2}$, we have
\begin{equation}\label{f:4.4}
\begin{aligned}
E_1 &= -\int_{\mathbb{R}^d_+} t
\frac{\partial}{\partial x_k}\Big\{\frac{\partial h^\alpha}{\partial x_k}\circ\rho\Big\}
\cdot v_i^\alpha\circ\rho
\frac{\partial\varphi}{\partial x_i}
\frac{\big(\frac{\partial\varphi}{\partial t}\big)^2}{1+|\nabla_{x^\prime}\varphi|} dx^\prime dt \\
& \quad + \int_{\mathbb{R}^d_+} t
\frac{\partial}{\partial x_k}\Big\{\frac{\partial h^\alpha}{\partial y}\circ\rho\Big\}
\cdot v_i^\alpha\circ\rho
\frac{\partial\varphi}{\partial x_k}
\frac{\partial\varphi}{\partial x_i}
\frac{\big(\frac{\partial\varphi}{\partial t}\big)^2}{1+|\nabla_{x^\prime}\varphi|} dx^\prime dt\\
&\quad + \int_{\mathbb{R}^d_+} t
\frac{\partial \pi}{\partial x_\alpha}\circ\rho
\cdot v_i^\alpha\circ\rho
\frac{\partial\varphi}{\partial x_i}
\frac{\big(\frac{\partial\varphi}{\partial t}\big)^2}{1+|\nabla_{x^\prime}\varphi|} dx^\prime dt
\end{aligned}
\end{equation}
where we use the fact that $|\nabla \rho| = \frac{\partial\varphi}{\partial t}$, and we denote
the last term in the right-hand side of $\eqref{f:4.4}$ by $E_2$. Then,
for the first two terms in the right-hand side of $\eqref{f:4.4}$, proceeding as in the
proof for $|B_3|$, we state the following result without details,
\begin{equation}\label{f:4.9}
|E_1-E_2|\leq
C\Big(\int_{\mathbb{R}^d_+} |\nabla h\circ\rho|^2 t dx^\prime dt\Big)^{\frac{1}{2}}
\Bigg\{\Big(\int_{\mathbb{R}^d_+} |\nabla v\circ\rho|^2 t dx^\prime dt\Big)^{\frac{1}{2}} +
\Big(\int_{\mathbb{R}^{d-1}} |(v\circ\rho)^*|^2 dx^\prime\Big)^{\frac{1}{2}}\Bigg\}.
\end{equation}

It is time to handle the problem brought by the derivative of the pressure term in $E_2$.
In fact, the bad case is just related to the factor $\frac{\partial\pi}{\partial y}$, since we have
\begin{equation}\label{f:4.5}
\begin{aligned}
E_2 &=  \sum_{k=1}^{d-1}\int_{\mathbb{R}^d_+} t\Bigg\{
\frac{\partial }{\partial x_k}\big\{\pi\circ\rho\big\}-\frac{\partial\pi}{\partial y}\circ\rho
\frac{\partial\varphi}{\partial x_k}\Bigg\}
\cdot v_i^k\circ\rho
\frac{\partial\varphi}{\partial x_i}
\frac{\big(\frac{\partial\varphi}{\partial t}\big)^2}{1+|\nabla_{x^\prime}\varphi|} dx^\prime dt \\
& \quad + \int_{\mathbb{R}^d_+} t\frac{\partial\pi}{\partial y}\circ\rho
\cdot v_i^d\circ\rho
\frac{\partial\varphi}{\partial x_i}
\frac{\big(\frac{\partial\varphi}{\partial t}\big)^2}{1+|\nabla_{x^\prime}\varphi|} dx^\prime dt.
\end{aligned}
\end{equation}

Hence, the problem is reduced to estimate the second line of $\eqref{f:4.5}$, denoted by $E_3$.
Note the fact that $\Delta\pi = 0$ in $D$ (see \cite[pp.1204]{RS} or \cite[pp.773]{EBFCEKGCV}),
some tedious manipulation yields
\begin{equation}\label{f:4.6}
\big(1+|\nabla_{x^\prime}\varphi|^2\big)\frac{\partial^2\pi}{\partial y^2}\circ\rho
= -\frac{\partial}{\partial x_k}\Big\{\frac{\partial \pi}{\partial x_k}\circ\rho\Big\}
+ \frac{\partial}{\partial x_k}\Big\{\frac{\partial\pi}{\partial y}\circ\rho\Big\}
\frac{\partial\varphi}{\partial x_k}.
\end{equation}
Moreover, taking integration by parts with respect to $t$ in $E_3$, we obtain
\begin{equation}
\begin{aligned}
E_3 & = - \int_{\mathbb{R}^d_+} t^2\frac{\partial^2\pi}{\partial y^2}\circ\rho
\cdot v_i^d\circ\rho
\frac{\partial\varphi}{\partial x_i}
\frac{\big(\frac{\partial\varphi}{\partial t}\big)^3}{1+|\nabla_{x^\prime}\varphi|} dx^\prime dt \\
& \quad - \int_{\mathbb{R}^d_+} t^2\frac{\partial\pi}{\partial y}\circ\rho
\cdot \frac{\partial}{\partial t}\Bigg\{v_i^d\circ\rho
\frac{\partial\varphi}{\partial x_i}
\frac{\big(\frac{\partial\varphi}{\partial t}\big)^2}{1+|\nabla_{x^\prime}\varphi|} \Bigg\}dx^\prime dt
:= E_3^1 + E_2^2.
\end{aligned}
\end{equation}
Obviously, the term $E_3^2$ may produce a Carleson measure, and we first handle it. By Cauchy's inequality, it follows that
\begin{equation}\label{f:4.7}
\begin{aligned}
|E_3^2| &\leq C\Bigg\{\int_{\mathbb{R}^d_+} t^2|\nabla\pi\circ\rho||\nabla v\circ\rho|dx^\prime dt
+\int_{\mathbb{R}^d_+} t^2|\nabla\pi\circ\rho||v\circ\rho||\nabla^2\varphi|dx^\prime dt\Bigg\} \\
&\leq C\Big(\int_{\mathbb{R}^d_+} t^3|\nabla\pi\circ\rho|^2dx^\prime dt\Big)^{\frac{1}{2}}
\Bigg\{\Big(\int_{\mathbb{R}^d_+} t|\nabla v\circ\rho|^2dx^\prime dt\Big)^{\frac{1}{2}}
+\Big(\int_{\mathbb{R}^d_+} |v\circ\rho|^2 |\nabla^2\varphi|t dx^\prime dt\Big)^{\frac{1}{2}}\Bigg\}\\
&\leq C\Big(\int_{\mathbb{R}^d_+} t|\pi\circ\rho|^2dx^\prime dt\Big)^{\frac{1}{2}}
\Bigg\{\Big(\int_{\mathbb{R}^d_+} t|\nabla v\circ\rho|^2dx^\prime dt\Big)^{\frac{1}{2}}
+\Big(\int_{\mathbb{R}^{d-1}} |(v\circ\rho)^*|^2 dx^\prime\Big)^{\frac{1}{2}}\Bigg\},
\end{aligned}
\end{equation}
where we use the estimate $\eqref{pri:2.5}$, as well as \cite[Corollary 7.3.6]{JD}  in the last step.
Then we turn to study $E_3^1$, and it follows from the identity $\eqref{f:4.6}$ that
\begin{equation*}
\begin{aligned}
E_3^1 &= \int_{\mathbb{R}^d_+} t^2\frac{\partial}{\partial x_k}\Big\{\frac{\partial \pi}{\partial x_k}\circ\rho\Big\}
\cdot v_i^d\circ\rho
\frac{\partial\varphi}{\partial x_i}
\frac{\big(\frac{\partial\varphi}{\partial t}\big)^3}{1+|\nabla_{x^\prime}\varphi|} dx^\prime dt \\
& \qquad -\int_{\mathbb{R}^d_+} t^2\frac{\partial}{\partial x_k}\Big\{\frac{\partial \pi}{\partial y}\circ\rho\Big\}
\cdot v_i^d\circ\rho
\frac{\partial\varphi}{\partial x_k}
\frac{\partial\varphi}{\partial x_i}
\frac{\big(\frac{\partial\varphi}{\partial t}\big)^3}{1+|\nabla_{x^\prime}\varphi|} dx^\prime dt.
\end{aligned}
\end{equation*}
Integrating by parts in $x_k$, and proceeding as in the proof of $E_3^2$, we also arrive at
\begin{equation}\label{f:4.8}
|E_3^1|\leq C\Big(\int_{\mathbb{R}^d_+} t|\pi\circ\rho|^2dx^\prime dt\Big)^{\frac{1}{2}}
\Bigg\{\Big(\int_{\mathbb{R}^d_+} t|\nabla v\circ\rho|^2dx^\prime dt\Big)^{\frac{1}{2}}
+\Big(\int_{\mathbb{R}^{d-1}} |(v\circ\rho)^*|^2 dx^\prime\Big)^{\frac{1}{2}}\Bigg\}
\end{equation}
Hence, the estimates $\eqref{f:4.7}$ and $\eqref{f:4.8}$ lead to the estimate of $|E_3|$,
and then it is not hard to see
\begin{equation*}
|E_2|\leq C\Big(\int_{\mathbb{R}^d_+} t|\pi\circ\rho|^2dx^\prime dt\Big)^{\frac{1}{2}}
\Bigg\{\Big(\int_{\mathbb{R}^d_+} t|\nabla v\circ\rho|^2dx^\prime dt\Big)^{\frac{1}{2}}
+\Big(\int_{\mathbb{R}^{d-1}} |(v\circ\rho)^*|^2 dx^\prime\Big)^{\frac{1}{2}}\Bigg\},
\end{equation*}
which together with the estimate $\eqref{f:4.9}$ gives
\begin{equation}\label{f:4.10}
\begin{aligned}
|E_1|
&\leq C \Bigg\{ \int_{\mathbb{R}^d_+} |\nabla h\circ\rho|^2 t dx^\prime dt\Big)^{\frac{1}{2}}
+ \int_{\mathbb{R}^d_+} |\pi\circ\rho|^2 t dx^\prime dt\Big)^{\frac{1}{2}}
\Bigg\} \\
&\qquad\qquad\quad\times
\Bigg\{\Big(\int_{\mathbb{R}^d_+} |\nabla v\circ\rho|^2 t dx^\prime dt\Big)^{\frac{1}{2}}
+\Big(\int_{\mathbb{R}^{d-1}} |(v\circ\rho)^*|^2 dx^\prime\Big)^{\frac{1}{2}}
\Bigg\}.
\end{aligned}
\end{equation}

Recalling the expression of $E_1$, we may follow the same procedure
above to estimate $|A_2|$, and the details are left to the reader.
Up to now, it is not hard to verify that $|B_1|$ is controlled by the right-hand side of
$\eqref{f:4.10}$ with a different constant $C$. By noting the estimates $\eqref{f:4.11}$ and $\eqref{f:4.12}$,
we have indeed proved the estimate $\eqref{f:4.1}$, and the proof is complete.
\end{proof}

\begin{lemma}\label{lemma:4.2}
Let $(u,q)\in
H^1(\Omega;\mathbb{R}^d)\times L^2(\Omega)$ be the solution of
$\Delta u = \nabla q$ and $\emph{div}(u) = 0$ in $\Omega$, and
$(u)^*\in L^2(\partial\Omega)$. Assume $\eta\in C^{0,1}(\partial\Omega)$, and
the vector-valued function $h$ is given as in Lemma $\ref{lemma:4.1}$.
Then we have
\begin{equation}\label{pri:4.2}
\begin{aligned}
\bigg|\int_{\Omega} q\nabla_\alpha \eta h^\alpha dx\bigg|
\leq C\big\|\eta\big\|_{C^{0,1}(\partial\Omega)}\Big(\int_{\Omega} |q|^2 \delta(x) dx\Big)^{\frac{1}{2}}
\Bigg\{\Big(\int_{\Omega} |\nabla h|^2 \delta(x) dx\Big)^{\frac{1}{2}}
+
\Big(\int_{\partial\Omega} |(h)^*|^2 dS\Big)^{\frac{1}{2}}
\Bigg\}.
\end{aligned}
\end{equation}
Moreover, if we additionally assume $|u|\in L^\infty(\Omega)$, then there holds
\begin{equation}\label{pri:4.5}
\begin{aligned}
\bigg|\int_{\Omega} q\nabla_\alpha \eta h^\alpha dx\bigg|
\leq C\big\|u\big\|_{L^\infty(\Omega)}\big\|\eta\big\|_{H^1(\partial\Omega)}
\big\|h\big\|_{L^2(\partial\Omega)},
\end{aligned}
\end{equation}
where  $C$ depends only on $d$ and $\Omega$.
\end{lemma}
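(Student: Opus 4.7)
The plan is to replay the half-space argument that produced $\eqref{pri:3.1}$, after first localizing and pulling back to $\mathbb{R}^d_+$ via the Kenig--Stein bi-Lipschitz map $\rho$ introduced in the proof of Lemma \ref{lemma:4.1}. The key integration-by-parts identity is $\eqref{pri:2.2}$, applied to the pullbacks. The two inequalities $\eqref{pri:4.2}$ and $\eqref{pri:4.5}$ come from different choices of extension of $\eta$ and from different Cauchy--Schwarz pairings on the resulting terms; the underlying decomposition is the same.

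For $\eqref{pri:4.2}$, first use Lemma \ref{lemma:2.1} to extend $\eta$ to $\Omega$ so that $\|\nabla\eta\|_{L^\infty(\Omega)}\le C\|\eta\|_{C^{0,1}(\partial\Omega)}$ and $|\nabla^2\eta|\delta\,dx$ is a Carleson measure of norm $\le C\|\eta\|_{C^{0,1}(\partial\Omega)}$. Pull $\int_\Omega q\nabla_\alpha\eta\,h^\alpha dx$ back to $\mathbb{R}^d_+$ via $\rho$, apply $\eqref{pri:2.2}$ to the pulled-back integrand, and absorb the extra chain-rule terms involving $\nabla\rho$ and $\nabla^2\rho$ exactly as in Lemma \ref{lemma:4.1} (using $|\nabla\rho|\sim 1$ and that $|\nabla^2\rho|^2 t\,dx'dt$ is a Carleson measure). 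Cauchy--Schwarz on each of the six resulting terms separates a $q$-factor, which produces either $(\int_\Omega |q|^2\delta\,dx)^{1/2}$ directly, or $(\int_\Omega |\nabla q|^2\delta^3\,dx)^{1/2}$, the latter absorbed into the former through $\eqref{pri:2.5}$. The paired $(\eta,h)$-factor is controlled by $\|\nabla\eta\|_{L^\infty}(\int_\Omega|\nabla h|^2\delta\,dx)^{1/2}$ in the terms containing only $\nabla\eta$, and by $\|\eta\|_{C^{0,1}(\partial\Omega)}\|(h)^*\|_{L^2(\partial\Omega)}$ in the terms containing $\nabla^2\eta$, using the Carleson measure property together with \cite[Corollary 7.3.6]{JD}. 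Summing gives $\eqref{pri:4.2}$.

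For $\eqref{pri:4.5}$, replace the Lipschitz extension of $\eta$ by its \emph{harmonic} extension to $\Omega$. Applying $\eqref{pri:2.4}$ to the harmonic vector field $\nabla\eta$ yields $(\int_\Omega|\nabla^2\eta|^2\delta\,dx)^{1/2}\le C\|(\nabla\eta)^*\|_{L^2(\partial\Omega)}\le C\|\eta\|_{H^1(\partial\Omega)}$, where the second step is the standard non-tangential maximal estimate for the gradient of a harmonic function. With the same pullback and identity $\eqref{pri:2.2}$, the Cauchy--Schwarz is now rearranged so that every factor of $q$ (or $\nabla q$, first reduced to $q$ via $\eqref{pri:2.5}$) is paired with a companion $v\in\{h,\nabla\eta\}$ to form the Carleson-type combination $\int_\Omega|q|^2|v|^2\delta\,dx$. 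Under the assumption $|u|\in L^\infty(\Omega)$, Lemma \ref{lemma:2.2} (estimate $\eqref{pri:2.6}$) bounds this combination by $C\|u\|_{L^\infty(\Omega)}^2\|(v)^*\|_{L^2(\partial\Omega)}^2$. The remaining $(\eta,h)$-factors are then controlled using $\|(h)^*\|_{L^2(\partial\Omega)}\le C\|h\|_{L^2(\partial\Omega)}$ together with the harmonic-function bound $\|(\nabla\eta)^*\|_{L^2(\partial\Omega)}\le C\|\eta\|_{H^1(\partial\Omega)}$, which also handles the $t^2$-weighted terms after one more application of $\eqref{pri:2.5}$ to the harmonic $\nabla\eta$.

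The principal obstacle is bookkeeping rather than substance: the pulled-back identity $\eqref{pri:2.2}$ produces many schematic terms, each a different polynomial in tangential and normal derivatives of $\eta,h,q$ together with chain-rule artifacts from $\rho$, and every one of them must be routed through the correct combination of Theorem \ref{thm:2.1}, Lemma \ref{lemma:2.1}, and (for $\eqref{pri:4.5}$) Lemma \ref{lemma:2.2}. The template for doing this was already established in the proof of Lemma \ref{lemma:4.1}, so the argument here is essentially a careful extension of that template rather than the introduction of a new tool.
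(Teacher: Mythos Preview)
Your proposal is correct and follows essentially the same route as the paper: localize, pull back via the Kenig--Stein map $\rho$, redo the integration-by-parts computation of $\eqref{pri:2.2}$ in the presence of the chain-rule factors (the paper writes this out directly as $\eqref{f:4.13}$ and the subsequent $B+E$ decomposition rather than invoking $\eqref{pri:2.2}$ verbatim, but the structure is identical), and then split the two estimates according to Lipschitz versus harmonic extension of $\eta$ together with Lemma~\ref{lemma:2.2} for $\eqref{pri:4.5}$. One small caveat: your appeals to $\eqref{pri:2.4}$ and $\eqref{pri:2.5}$ for the harmonic $\nabla\eta$ are not literally valid, since those are stated for Stokes pairs; the paper instead cites the classical harmonic square-function/nontangential estimates \cite{BD2,DJSK} (see $\eqref{f:4.15}$, $\eqref{f:4.24}$), which give exactly what you need.
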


\begin{proof}
We use the same notation as in the proof of Lemma $\ref{lemma:4.1}$, and
by the same localization methods as stated there, it suffices to
establish the estimate $\eqref{pri:4.2}$ with $\Omega$ replaced by $D$, under assumption that
$\Delta u = \nabla q$ and $\text{div}(u) = 0$ in $D$. In view of Lemma $\ref{lemma:2.1}$, it is known
that there exist an extension of $\eta$, still denoted by $\eta$, and $|\nabla^2\eta(x^\prime,t)|^2tdx^\prime dt$
is a Carleson measure. By
\begin{equation*}
\int_D q \nabla_\alpha \eta h^\alpha dx
=
\int_{\mathbb{R}^{d}_+} q\circ\rho \frac{\partial \eta}{\partial x_\alpha}\circ\rho h^\alpha\circ\rho \frac{\partial\varphi}{\partial t}dx^\prime dt := A,
\end{equation*}
where $\rho:\mathbb{R}_+^d\to D$ is referred as a special bi-Lipschitz map (see $\eqref{eq:4.1}$),
we manage to show
\begin{equation}\label{f:4.16}
\begin{aligned}
\bigg|\int_{\mathbb{R}^{d}_+} q\circ\rho \frac{\partial \eta}{\partial x_\alpha}
&\circ\rho h^\alpha\circ\rho \frac{\partial\varphi}{\partial t}dx^\prime dt\bigg|
\leq C\big\|\eta\big\|_{C^{0,1}(\partial\Omega)}\Big(\int_{\mathbb{R}^d_+} t|q\circ\rho|^2 dx^\prime dt\Big)^{\frac{1}{2}}\\
&\qquad\qquad \times
\Bigg\{\Big(\int_{\mathbb{R}^d_+} t|\nabla h\circ\rho|^2 dx^\prime dt\Big)^{\frac{1}{2}}
+
\Big(\int_{\mathbb{R}^{d-1}} |(h\circ\rho)^*|^2 dx^\prime\Big)^{\frac{1}{2}}
\Bigg\}
\end{aligned}
\end{equation}
and the desired estimate $\eqref{pri:4.2}$ will follow immediately. Observing that
\begin{equation}\label{f:4.13}
\begin{aligned}
A & = -\int_{\mathbb{R}_+^d} t\frac{\partial}{\partial t}\Bigg\{
q\circ\rho \frac{\partial \eta}{\partial x_\alpha}\circ\rho h^\alpha\circ\rho \frac{\partial\varphi}{\partial t}\Bigg\} dx^\prime dt \\
& = -\int_{\mathbb{R}_+^d} t\frac{\partial q}{\partial y}\circ\rho \frac{\partial \eta}{\partial x_\alpha}\circ\rho h^\alpha\circ\rho \Big(\frac{\partial\varphi}{\partial t}\Big)^2dx^\prime dt \\
& -\int_{\mathbb{R}_+^d} tq\circ\rho \frac{\partial^2 \eta}{\partial x_\alpha\partial y}\circ\rho h^\alpha\circ\rho \Big(\frac{\partial\varphi}{\partial t}\Big)^2dx^\prime dt
-\int_{\mathbb{R}_+^d} tq\circ\rho \frac{\partial \eta}{\partial x_\alpha}\circ\rho
\frac{\partial}{\partial t}\Big\{h^\alpha\circ\rho \frac{\partial\varphi}{\partial t}\Big\}dx^\prime dt,
\end{aligned}
\end{equation}
the last line of $\eqref{f:4.13}$ is controlled by
\begin{equation*}
C\big\|\eta\big\|_{C^{0,1}(\partial\Omega)}\Big(\int_{\mathbb{R}^d_+} t|q\circ\rho|^2 dx^\prime dt\Big)^{\frac{1}{2}}\\
\Bigg\{\Big(\int_{\mathbb{R}^d_+} t|\nabla h\circ\rho|^2 dx^\prime dt\Big)^{\frac{1}{2}}
+
\Big(\int_{\mathbb{R}^{d-1}} |(h\circ\rho)^*|^2 dx^\prime\Big)^{\frac{1}{2}}
\Bigg\},
\end{equation*}
where we use the fact that $|\nabla^2 \eta(x^\prime,t)|^2tdx^\prime dt$ and
$|\nabla^2 \varphi(x^\prime,t)|^2tdx^\prime dt$ are Carleson measures,
as well as \cite[Corollary 7.3.6]{JD}. The relatively tough term is in the second line of $\eqref{f:4.13}$, and integrating by parts in $t$ again, it is equal to
\begin{equation*}
\int_{\mathbb{R}_+^d} t^2\frac{\partial^2 q}{\partial y^2}\circ\rho \frac{\partial \eta}{\partial x_\alpha}\circ\rho h^\alpha\circ\rho \Big(\frac{\partial\varphi}{\partial t}\Big)^3dx^\prime dt
+ \int_{\mathbb{R}_+^d} t^2\frac{\partial q}{\partial y}\circ\rho
\frac{\partial}{\partial t}\Bigg\{\frac{\partial \eta}{\partial x_\alpha}\circ\rho h^\alpha\circ\rho \Big(\frac{\partial\varphi}{\partial t}\Big)^2\Bigg\}dx^\prime dt := B + E.
\end{equation*}

Let $\tilde{C} = C\|\eta\|_{C^{0,1}(\partial\Omega)}$.
We first handle $E$, which will produce the Carleson measures, and then it follows that
\begin{equation}\label{f:4.14}
\begin{aligned}
|E|
&\leq
\tilde{C}\Big(\int_{\mathbb{R}^d_+} t^3|\nabla q\circ\rho|^2 dx^\prime dt\Big)^{\frac{1}{2}}
\Bigg\{\Big(\int_{\mathbb{R}^d_+} t|\nabla h\circ\rho|^2 dx^\prime dt\Big)^{\frac{1}{2}}
+
\Big(\int_{\mathbb{R}^{d-1}} |(h\circ\rho)^*|^2 dx^\prime\Big)^{\frac{1}{2}}
\Bigg\}\\
&\leq
\tilde{C}\Big(\int_{\mathbb{R}^d_+} t|q\circ\rho|^2 dx^\prime dt\Big)^{\frac{1}{2}}
\Bigg\{\Big(\int_{\mathbb{R}^d_+} t|\nabla h\circ\rho|^2 dx^\prime dt\Big)^{\frac{1}{2}}
+
\Big(\int_{\mathbb{R}^{d-1}} |(h\circ\rho)^*|^2 dx^\prime\Big)^{\frac{1}{2}}
\Bigg\},
\end{aligned}
\end{equation}
where we apply the estimate $\eqref{pri:2.5}$ to the last step. To control $|B|$,
we apply the following identity
\begin{equation}\label{f:4.25}
\frac{\partial^2 q}{\partial y^2}\circ\rho
=\frac{1}{1+|\nabla_{x^\prime}\varphi|}\frac{\partial}{\partial x_k}
\bigg\{\frac{\partial q}{\partial y}\circ\rho \frac{\partial\varphi}{\partial x_k}
-\frac{\partial q}{\partial x_k}\circ\rho\bigg\}
- \frac{1}{1+|\nabla_{x^\prime}\varphi|}
\frac{\partial q}{\partial y}\circ\rho
\frac{\partial^2\varphi}{\partial x_k^2}
\end{equation}
to the term $B$ by noting the fact that $\Delta q = 0$ in $D$,  and then it is not hard to derive
\begin{equation*}
|B|\leq \tilde{C}\Big(\int_{\mathbb{R}^d_+} t|q\circ\rho|^2 dx^\prime dt\Big)^{\frac{1}{2}}
\Bigg\{\Big(\int_{\mathbb{R}^d_+} t|\nabla h\circ\rho|^2 dx^\prime dt\Big)^{\frac{1}{2}}
+
\Big(\int_{\mathbb{R}^{d-1}} |(h\circ\rho)^*|^2 dx^\prime\Big)^{\frac{1}{2}}
\Bigg\}.
\end{equation*}
This together with the estimate $\eqref{f:4.14}$ implies the desired result $\eqref{f:4.16}$.

We now turn to the proof of $\eqref{pri:4.5}$. Before proceeding further,
let $G\circ\rho$ be the harmonic extension of $\eta\circ\rho$ to $\mathbb{R}_+^d$,
i.e., $\Delta G\circ\rho = 0$ in $\mathbb{R}^d_+$ and $G= \eta$ on $\mathbb{R}^{d-1}$. We mention that
by a partition of unity we may assume that $\eta$ has compact support in $\mathbb{R}^{d-1}$.
Due to \cite{BD2,DJSK},
there holds
\begin{equation}\label{f:4.24}
\Big(\int_{\mathbb{R}^{d}_+}|\nabla^2 G\circ\rho|^2t dx^\prime dt\Big)^{\frac{1}{2}}
\leq C\Big(\int_{\mathbb{R}^{d-1}}|(\nabla G\circ\rho)^*|^2 dx^\prime\Big)^{\frac{1}{2}}
\end{equation}
where $C$ depends only on $d$. As in Remark $\ref{re:2.2}$, the harmonic extension function of $\eta$ is still denoted by itself in the follow statements.

Recalling the identity $\eqref{f:4.13}$, the last line of
$\eqref{f:4.13}$ is bounded by
\begin{equation*}
\begin{aligned}
&C\Bigg\{
\Big(\int_{\mathbb{R}^d_+}t|h\circ\rho|^2|q\circ\rho|^2dx^\prime dt\Big)^{\frac{1}{2}}
\Big(\int_{\mathbb{R}^d_+}t|\nabla^2\eta\circ\rho|^2dx^\prime dt\Big)^{\frac{1}{2}} \\
&\quad + \Big(\int_{\mathbb{R}^d_+}t|\nabla \eta\circ\rho|^2|q\circ\rho|^2dx^\prime dt\Big)^{\frac{1}{2}}
\Big(\int_{\mathbb{R}^d_+}t|\nabla h\circ\rho|^2dx^\prime dt\Big)^{\frac{1}{2}} \\
&\quad + \Big(\int_{\mathbb{R}^d_+}t|\nabla \eta\circ\rho|^2|q\circ\rho|^2dx^\prime dt\Big)^{\frac{1}{2}}
\Big(\int_{\mathbb{R}^d_+}t|h\circ\rho|^2|\nabla^2 \varphi(x^\prime,t)|^2 dx^\prime
dt\Big)^{\frac{1}{2}}\Bigg\}.
\end{aligned}
\end{equation*}
On account of Lemma $\ref{lemma:2.2}$ and the estimate $\eqref{f:4.24}$, the above quantities is controlled by
\begin{equation}\label{f:4.26}
 C\big\|u\circ\rho\big\|_{L^\infty(\mathbb{R}^d_+)}\Big(\int_{\mathbb{R}^{d-1}}|(h\circ\rho)^*|^2 dx^\prime\Big)^{\frac{1}{2}}
 \Big(\int_{\mathbb{R}^{d-1}}|(\nabla \eta\circ\rho)^*|^2 dx^\prime\Big)^{\frac{1}{2}},
\end{equation}
where we also use the fact that $|\nabla^2 \varphi(x^\prime,t)|^2tdx^\prime dt$ is a Carleson measure.
As we did in the proof of $\eqref{pri:4.2}$,
we have divided the integral in the second line of $\eqref{f:4.13}$ into two parts, also denoted by
$B$ and $E$, respectively. Then by noting that $|q\circ\rho|^2t dx^\prime dt$ and $|\nabla^2 \varphi(x^\prime,t)|^2tdx^\prime dt$ are Carleson measures, it is not hard to obtain that
\begin{equation*}
|E|\leq  C\big\|u\circ\rho\big\|_{L^\infty(\mathbb{R}^d_+)}\Big(\int_{\mathbb{R}^{d-1}}|(h\circ\rho)^*|^2 dx^\prime\Big)^{\frac{1}{2}}
 \Big(\int_{\mathbb{R}^{d-1}}|(\nabla \eta\circ\rho)^*|^2 dx^\prime\Big)^{\frac{1}{2}},
\end{equation*}
where the estimate $\eqref{pri:2.5}$ is also used in the computation. Furthermore,
the identity $\eqref{f:4.25}$ is also applied to estimate $|B|$ with the major change being
the substitution of using the Carleson measure $|\nabla^2 \eta\circ\rho|^2tdx^\prime dt$ for
employing the Carleson measure $|q\circ\rho|^2tdx^\prime dt$.
Thus, the quantity $|B|$ is also bounded by $\eqref{f:4.26}$ with a different constant $C$.
Up to now, we have proved that
\begin{equation*}
\bigg|\int_{D} q\nabla_\alpha \eta h^\alpha dx\bigg|
\leq C\big\|u\big\|_{L^\infty(\Omega)}\big\|\eta\big\|_{H^1(\Delta)}
\big\|h\big\|_{L^2(\Delta)},
\end{equation*}
where $\Delta = \{(x^\prime,y)\in\mathbb{R}^d:y=\psi(x^\prime), |x^\prime|<r\}
=\partial\Omega\cap B(0,r)$,
and we used the nontangential maximal function estimates (see \cite{V,EBFCEKGCV}).
The details is left to the reader and we have completed the proof.
\end{proof}

\begin{lemma}
Let $(h,\pi)\in
H^1(\Omega;\mathbb{R}^d)\times L^2(\Omega)$ be the solution of
$\Delta h = \nabla \pi$ and $\emph{div}(h) = 0$ in $\Omega$, and
$(h)^*\in L^2(\partial\Omega)$. Assume $\eta\in C^{0,1}(\partial\Omega)$, and
the vector-valued function $u$ is given as in Lemma $\ref{lemma:4.2}$.
Then we have
\begin{equation}\label{pri:4.3}
\begin{aligned}
\bigg|\int_{\Omega} \pi\nabla_\alpha \eta u^\alpha dx\bigg|
\leq C\big\|\eta\big\|_{C^{0,1}(\partial\Omega)}\Big(\int_{\Omega} |\pi|^2\delta(x) dx\Big)^{\frac{1}{2}}
\Bigg\{\Big(\int_{\Omega} |\nabla u|^2 \delta(x) dx\Big)^{\frac{1}{2}}
+
\Big(\int_{\partial\Omega} |(u)^*|^2 dS\Big)^{\frac{1}{2}}
\Bigg\}.
\end{aligned}
\end{equation}
Moreover, if $|u|\in L^\infty(\Omega)$, then there admits
\begin{equation}\label{pri:4.4}
\begin{aligned}
\bigg|\int_{\Omega} \pi\nabla_\alpha \eta u^\alpha dx\bigg|
\leq C\big\|u\big\|_{L^\infty(\Omega)}\big\|\eta\big\|_{H^1(\partial\Omega)}
\big\|h\big\|_{L^2(\partial\Omega)},
\end{aligned}
\end{equation}
where  $C$ depends only on $d$ and $\Omega$.
\end{lemma}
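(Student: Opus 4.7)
The plan is to mirror the proof of Lemma \ref{lemma:4.2} essentially verbatim, exploiting the complete symmetry between the two Stokes solutions $(u,q)$ and $(h,\pi)$: both velocities have nontangential maximal function in $L^2(\partial\Omega)$, both pressures are harmonic in $\Omega$, and Theorem \ref{thm:2.1} applies to each. Concretely, the computation used to prove $\eqref{pri:4.2}$ gives $\eqref{pri:4.3}$ after the swap $q\leftrightarrow \pi$ and $h\leftrightarrow u$, and the computation used for $\eqref{pri:4.5}$ gives $\eqref{pri:4.4}$ after the analogous swap.

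For $\eqref{pri:4.3}$, I would localize via the Kenig--Stein bi-Lipschitz map $\rho:\mathbb{R}^d_+\to D_r$ from $\eqref{eq:4.1}$, integrate by parts in the $t$-variable in the transported integral $\int_{\mathbb{R}^d_+} \pi\circ\rho \,(\partial\eta/\partial x_\alpha)\circ\rho \, u^\alpha\circ\rho \,(\partial\varphi/\partial t)\,dx^\prime dt$, and sort the resulting terms into three groups exactly paralleling $\eqref{f:4.13}$: (i) Carleson-paired terms where $|\nabla^2\eta\circ\rho|^2 t\,dx^\prime dt$ is a Carleson measure by Lemma \ref{lemma:2.1}, combined with the weighted $L^2$-bound of $\pi$ and the nontangential maximal function of $u\circ\rho$ via \cite[Corollary 7.3.6]{JD}; (ii) terms in which $|\nabla\eta|$ is pulled out in $L^\infty$ and paired with $\nabla u$ weighted by $\delta$; and (iii) the genuinely hard term carrying the normal-derivative factor $\partial\pi/\partial y$, for which I would integrate by parts once more in $t$ and then use $\Delta\pi=0$ in $D_r$ to rewrite $\partial^2\pi/\partial y^2$ as a combination of tangential derivatives of $\pi\circ\rho$ (an identity of the shape $\eqref{f:4.25}$), thereby converting the offending term into a $\delta^3$-weighted gradient of $\pi$ that is absorbed by $\int|\pi|^2\delta\,dx$ through Theorem \ref{thm:2.1}.

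For $\eqref{pri:4.4}$ I would rerun the same scheme with the harmonic extension of $\eta$, so that $\int|\nabla^2\eta|^2 t\,dx^\prime dt \leq C\|(\nabla\eta)^*\|_{L^2(\partial\Omega)}^2 \leq C\|\eta\|_{H^1(\partial\Omega)}^2$ by $\eqref{f:4.24}$. The subtle point is how to produce the factors $\|u\|_{L^\infty(\Omega)}$ and $\|h\|_{L^2(\partial\Omega)}$: unlike in the proof of $\eqref{pri:4.5}$, I do \emph{not} require the Carleson property of $|\pi|^2\delta\,dx$ (which would demand $|h|\in L^\infty$, a hypothesis we do not have); instead I simply exploit the pointwise bound $|u(x)|\leq \|u\|_{L^\infty(\Omega)}$ and invoke Theorem \ref{thm:2.1} directly to estimate
\begin{equation*}
\int_{\mathbb{R}^d_+} t\,|u\circ\rho|^2 |\pi\circ\rho|^2\,dx^\prime dt \leq \|u\|_{L^\infty(\Omega)}^2 \int_{\mathbb{R}^d_+} t\,|\pi\circ\rho|^2\,dx^\prime dt \leq C\|u\|_{L^\infty(\Omega)}^2 \|h\|_{L^2(\partial\Omega)}^2.
\end{equation*}
This clean substitution replaces the appeal to Lemma \ref{lemma:2.2} made in proving $\eqref{pri:4.5}$. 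The main obstacle is, as before, the term carrying $\partial\pi/\partial y$: it forces the long manipulation around the identity $\eqref{f:4.25}$, but since $\pi$ is harmonic in $D_r$ and its square function is already controlled by $\|h\|_{L^2(\partial\Omega)}$, no essentially new difficulty arises beyond careful bookkeeping of the factors of $\partial\varphi/\partial x_k$ and $\partial\varphi/\partial t$ produced by the chain rule.
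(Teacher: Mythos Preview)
Your approach is correct and matches the paper's own proof, which simply says to interchange $(u,q)$ and $(h,\pi)$ in the argument for Lemma~\ref{lemma:4.2} and otherwise omits details. You have in fact gone further than the paper by making explicit a subtlety it glosses over: the naive swap in the proof of $\eqref{pri:4.5}$ would require $|\pi|^2\delta(x)\,dx$ to be a Carleson measure, which via Lemma~\ref{lemma:2.2} would need $|h|\in L^\infty(\Omega)$---a hypothesis we do not have.

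Your fix via the pointwise bound $|u|\le\|u\|_{L^\infty(\Omega)}$ is exactly right for the terms in which $u$ appears undifferentiated, such as $\int t\,|u\circ\rho|^2|\pi\circ\rho|^2\,dx^\prime dt$. However, your claim that this substitution ``replaces the appeal to Lemma~\ref{lemma:2.2}'' is a slight overstatement. When the $t$-derivative in the swapped version of $\eqref{f:4.13}$ lands on $u^\alpha\circ\rho$, one obtains a term of the shape
\[
\int_{\mathbb{R}^d_+} t\,\pi\circ\rho\cdot(\nabla\eta\circ\rho)\cdot(\nabla u\circ\rho)\,dx^\prime dt,
\]
for which neither your $L^\infty$ substitution nor the unavailable $|\pi|^2\delta$ Carleson route applies directly. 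The remedy is to re-pair in Cauchy--Schwarz as
\[
\Big(\int t\,|\pi\circ\rho|^2\Big)^{1/2}\Big(\int t\,|\nabla\eta\circ\rho|^2|\nabla u\circ\rho|^2\Big)^{1/2}
\]
and then invoke the \emph{other} half of Lemma~\ref{lemma:2.2}: since $|u|\in L^\infty(\Omega)$, the measure $|\nabla u|^2\delta(x)\,dx$ is Carleson with norm $\le C\|u\|_{L^\infty(\Omega)}^2$, giving the required factor $\|u\|_{L^\infty(\Omega)}\|(\nabla\eta)^*\|_{L^2}$. The analogous terms in the $B$ and $E$ pieces need the same adjustment. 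With this minor correction your outline goes through.
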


\begin{proof}
Using the same argument as in the proof of Lemma $\ref{lemma:4.2}$, we can
easily obtain the estimate $\eqref{pri:4.3}$ by interchanging $(u,q)$ and $(h,\pi)$, respectively.
The proof of $\eqref{pri:4.4}$ is quite similar to that given previously for the estimate
$\eqref{pri:4.5}$, and so is omitted.
\end{proof}

\begin{flushleft}
\textbf{Proof of Theorem \ref{thm:1.1}.} It follows from the identity $\eqref{pri:2.1}$ that
\end{flushleft}
\begin{equation}\label{f:4.17}
\begin{aligned}
\bigg|\int_{\partial\Omega}\big[\Lambda,\eta\big]&f\cdot h dS\bigg|
= \bigg|\int_{\partial\Omega} [\Lambda(\eta f)]^\alpha h^\alpha dS - \int_{\partial\Omega} \eta[\Lambda(f)]^\alpha h^\alpha dS \bigg|\\
& \leq \bigg|\int_{\Omega} u^\alpha \nabla \eta \cdot \nabla h^\alpha dx\bigg|
+  \bigg|\int_{\Omega} \nabla u^\alpha \cdot \nabla \eta h^\alpha dx\bigg|
+ \bigg|\int_{\Omega} q\nabla_\alpha \eta  h^\alpha\bigg|
+ \bigg|\int_{\Omega} \pi\nabla_\alpha \eta u^\alpha dx\bigg|\\
&:= I_1+I_2+I_3+I_4.
\end{aligned}
\end{equation}
Note that
\begin{equation}\label{f:4.18}
\begin{aligned}
I_3+I_4
&\leq C\|\eta\|_{C^{0,1}(\partial\Omega)}\|f\|_{L^2(\partial\Omega)}\|h\|_{L^2(\partial\Omega)}
\end{aligned}
\end{equation}
where we employ the estimates $\eqref{pri:4.2}$, $\eqref{pri:4.3}$, $\eqref{pri:2.4}$ and
$\eqref{pri:2.5}$, as well as $\|(h)^*\|_{L^2(\partial\Omega)}\leq C\|h\|_{L^2(\partial\Omega)}$
(see \cite[Theorem 3.9]{EBFCEKGCV}). Concerning $I_1$ and $I_2$, the estimates are based upon
the so-called Dahlberg's bilinear estimate, i.e. Lemma $\ref{lemma:4.1}$. Taking $I_1$ as an example,
let $v^\alpha= \nabla \eta u^\alpha$ in the estimate $\eqref{pri:4.1}$, and it follows that
\begin{equation}\label{f:4.19}
\begin{aligned}
&\qquad\qquad\qquad\quad\bigg|\int_{\Omega} \nabla h \cdot \nabla \eta u dx\bigg|
\leq C
\Bigg\{ \Big(\int_{\Omega} |\nabla h|^2 \delta(x) dx\Big)^{\frac{1}{2}}
+ \int_{\Omega} |\pi|^2 \delta(x) dx\Big)^{\frac{1}{2}}
\Bigg\} \\
&\times
\Bigg\{
\|\eta\|_{C^{0,1}(\partial\Omega)}\Big(\int_{\Omega} |\nabla u|^2\delta(x) dx\Big)^{\frac{1}{2}}
+\Big(\int_{\Omega} |u|^2|\nabla^2 \eta|^2 \delta(x) dx\Big)^{\frac{1}{2}}
+\|\eta\|_{C^{0,1}(\partial\Omega)}\Big(\int_{\partial\Omega} |(u)^*|^2 dS\Big)^{\frac{1}{2}}
\Bigg\} \\
&\qquad\qquad\qquad\qquad\qquad\qquad\qquad~
\leq C\big\|\eta\big\|_{C^{0,1}(\partial\Omega)}
\big\|f\big\|_{L^2(\partial\Omega)}\big\|h\big\|_{L^2(\partial\Omega)},
\end{aligned}
\end{equation}
where we mention that $|\nabla^2 \eta|^2 \delta(x)dx$ is a Carleson measure. The term $I_2$ follows
the similar computations. Collecting the estimates $\eqref{f:4.17}$, $\eqref{f:4.18}$
and $\eqref{f:4.19}$ consequently leads to
\begin{equation*}
\bigg|\int_{\partial\Omega}\big[\Lambda,\eta\big]f\cdot h dS\bigg|
\leq  C\big\|\eta\big\|_{C^{0,1}(\partial\Omega)}
\big\|f\big\|_{L^2(\partial\Omega)}\big\|h\big\|_{L^2(\partial\Omega)},
\end{equation*}
which yields the desired estimate $\eqref{pri:1.1}$ by duality.

Now, we proceed to prove the estimate $\eqref{pri:1.2}$ in the case of $d=3$. In such case, it is
well-known that
\begin{equation*}\label{f:4.23}
\|u\|_{L^\infty(\Omega)}
\leq C\|f\|_{L^\infty(\partial\Omega)}
\end{equation*}
(see \cite[Theorem 0.2]{Shen3}), and this indicates it suffices to establish
\begin{equation}\label{f:4.20}
\big\|\Lambda(\eta f) - \eta\Lambda(f)\big\|_{L^2(\partial\Omega)}
\leq C\|\eta\|_{H^{1}(\partial\Omega)}\|u\|_{L^\infty(\Omega)}.
\end{equation}

Before approaching the above estimate, let $G$ be the harmonic extension of $\eta$ to $\Omega$,
i.e., $\Delta G = 0$ in $\Omega$ and $G= \eta$ on $\partial\Omega$. Furthermore, due to \cite{BD2,DJSK},
there holds
\begin{equation}\label{f:4.15}
\Big(\int_{\partial\Omega}|(\nabla G)^*|^2 dS\Big)^{\frac{1}{2}}
+ \Big(\int_{\Omega}|\nabla^2 G|^2\delta(x) dx\Big)^{\frac{1}{2}}
\leq C\|\eta\|_{H^1(\partial\Omega)},
\end{equation}
where $C$ depends only on $d$ and $\Omega$. As in Remark $\ref{re:2.2}$, the harmonic extension function of $\eta$ is still denoted by itself in the follow statements.

To estimate $\eqref{f:4.15}$, let us review $\eqref{f:4.17}$, and re-estimate it under the new condition.
It follows the Dahlberg's bilinear estimate $\eqref{pri:4.1}$ that
\begin{equation}\label{f:4.21}
\begin{aligned}
I_1
&\leq C
\Bigg\{ \Big(\int_{\Omega} |\nabla h|^2 \delta(x) dx\Big)^{\frac{1}{2}}
+ \Big(\int_{\Omega} |\pi|^2 \delta(x) dx\Big)^{\frac{1}{2}}
\Bigg\} \\
&\qquad\times
\Bigg\{\Big(\int_{\Omega} |\nabla \eta|^2 |\nabla u|^2 \delta(x) dx\Big)^{\frac{1}{2}}
+\Big(\int_{\Omega} |u|^2|\nabla^2 \eta|^2 \delta(x) dx\Big)^{\frac{1}{2}}
+\Big(\int_{\partial\Omega} |(u\nabla \eta)^*|^2 dS\Big)^{\frac{1}{2}}
\Bigg\} \\
&\leq C\big\|h\big\|_{L^2(\partial\Omega)}
\Bigg\{\big\|u\big\|_{L^\infty(\Omega)}\big\|(\nabla \eta)^*\big\|_{L^2(\partial\Omega)}
+\big\|u\big\|_{L^\infty(\Omega)}\Big(\int_{\Omega}|\nabla^2 \eta|^2\delta(x) dx\Big)^\frac{1}{2}
\Bigg\}\\
&\leq
C\big\|h\big\|_{L^2(\partial\Omega)}\|\eta\big\|_{H^1(\partial\Omega)}\big\|u\big\|_{L^\infty(\Omega)}.
\end{aligned}
\end{equation}
In the second inequality, we employ the fact that $|\nabla u|^2\delta(x)dx$ is a Carleson measure
(see Lemma $\ref{lemma:2.2}$), and
the last inequality follows from the estimate $\eqref{f:4.15}$. For $I_2$, we first observe that
\begin{equation*}
\int_\Omega \nabla u^\alpha\cdot \nabla \eta h^\alpha dx
= - \int_\Omega u^\alpha \nabla \eta\cdot \nabla h^\alpha dx +\int_{\partial\Omega}
n\cdot\nabla \eta u^\alpha h^\alpha dS
\end{equation*}
since $\Delta \eta = 0$ in $\Omega$ here. Hence, it is not hard to obtain
\begin{equation}\label{f:4.22}
I_2 \leq
C\big\|h\big\|_{L^2(\partial\Omega)}\|\eta\big\|_{H^1(\partial\Omega)}\big\|u\big\|_{L^\infty(\Omega)},
\end{equation}
where we use the so-called Rellich estimate $\|\nabla_{\tan}\eta\|_{L^2(\partial\Omega)}\approx\|\frac{\partial \eta}{\partial n}\|_{L^2(\partial\Omega)}$
(see \cite{V}). Finally, it follows from the estimates $\eqref{pri:4.5}$ and $\eqref{pri:4.4}$ that
\begin{equation*}
I_3 + I_4 \leq
C\big\|h\big\|_{L^2(\partial\Omega)}\|\eta\big\|_{H^1(\partial\Omega)}\big\|u\big\|_{L^\infty(\Omega)},
\end{equation*}
and this together with $\eqref{f:4.21}$ and $\eqref{f:4.22}$ leads to the desired estimate
$\eqref{f:4.20}$ by a duality argument. We have completed the proof.
\qed

\section{Appendix}\label{section:5}

In this section, we give an simple and illuminating verification of $\eqref{pri:1.1}$. Consider the following Stokes system:
\begin{equation}\label{pde:2.2}
\left\{\begin{aligned}
(\partial^2_x + \partial^2_y)u^1 &=\partial_x q \\
(\partial^2_x + \partial^2_y)u^2 &=\partial_y q
\end{aligned}\right.
\quad \text{and}
\quad
\partial_x u^1 + \partial_y u^2 = 0
\quad \text{in}~\mathbb{R}^2_+,
\qquad
\left\{\begin{aligned}
u^1 &= f^1 \\
u^2 &= f^2
\end{aligned}\right.
\quad \text{on}~\partial\mathbb{R}^2_+ = \mathbb{R}.
\end{equation}
Additionally, it is convenient to assume $u$ will vanish as $|x|$ goes to infinity.
To solve the above equations, let $u=\nabla^T\psi$, where
$\nabla^T = (-\partial_y,\partial_x)$, and $\psi$ is a scale function.
Plugging it back into $\eqref{pde:2.2}$, it is not hard to derive  $\Delta^2\psi = 0$ in $\mathbb{R}^2_+$. Then applying Fourier transformation with respect to $x$, we have
\begin{equation}\label{pde:2.3}
\left\{\begin{aligned}
(\partial_y^2 - |k|^2)^2\widehat{\psi}(k,y) &= 0
& \quad &\text{in} ~~\mathbb{R}\times\mathbb{R}_+, \\
\widehat{\psi}(k,0) &= \widehat{f^2}/(ik)& \quad &\text{on} ~~\mathbb{R}, \\
\partial_y\widehat{\psi}(k,0) &= -\widehat{f^1}& \quad &\text{on} ~~\mathbb{R}.
\end{aligned}\right.
\end{equation}
By the condition $\widehat{\psi}(k,\infty) = 0$, it is clear to figure out the solution of  $\eqref{pde:2.3}$, and it
is written by
\begin{equation}\label{f:2.11}
\widehat{\psi}(k,y) = \frac{\widehat{f^2}}{ik}e^{-|k|y}
+(|k|\frac{\widehat{f^2}}{ik}-\widehat{f^1})y e^{-|k|y}.
\end{equation}
Since
\begin{equation*}
 \big(\Lambda(f)\big)^1 = \frac{\partial u^1}{\partial y} - q
 \qquad
 \text{and}
 \qquad
 \big(\Lambda(f)\big)^2 = \frac{\partial u^2}{\partial y},
\end{equation*}
we insert $(u^1,u^2) = (-\partial_y\psi,\partial_x\psi)$ into the above formula, and then taking
Fourier transformation, we obtain
\begin{equation*}
\widehat{\Lambda(f)} = \big(\partial_y \widehat{u^1}-\widehat{q},~\partial_y\widehat{u^2}\big)
= \big(-\partial^2_y\widehat{\psi}-\widehat{q},~ik\partial_y\widehat{\psi}\big)\Big|_{y=0}.
\end{equation*}
Hence the problem is reduced to calculate the quantities $\partial_y^2\widehat{\psi}, \partial_y\widehat{\psi}$ and $\widehat{q}$ on $\mathbb{R}$. By a tedious computation, it follows
from $\eqref{f:2.11}$ that
\begin{equation}\label{f:2.12}
\begin{aligned}
\partial_y^2\widehat{\psi}\Big|_{y=0} &= 2|k|\widehat{f^1} + ik\widehat{f^2}, \\
ik\partial_y\widehat{\psi}\Big|_{y=0} &= -ik\widehat{f^1}.
\end{aligned}
\end{equation}
The rest thing is to compute $\widehat{q}$. In view of $\eqref{pde:2.2}$ and $u^1=-\partial_y\psi$, we have $(\partial^2_y-|k|^2)\partial_y\widehat{\psi} = ik\widehat{q}$ in $\mathbb{R}_+^2$, and
by $\eqref{f:2.11}$, there holds
\begin{equation}\label{f:2.13}
\widehat{q}\Big|_{y=0} = -2|k|\widehat{f^2} + 2ik\widehat{f^1}.
\end{equation}
Hence, combining $\eqref{f:2.12}$ and $\eqref{f:2.13}$, it is clear to see that
the quantity $\widehat{\Lambda(f)}$ is determined by
\begin{equation*}
\big( -2|k|\widehat{f^1} - ik\widehat{f^2} + 2|k|\widehat{f^2} - 2ik\widehat{f^1},
\quad -ik\widehat{f^1}\big).
\end{equation*}

Let $H$ denote the Hilbert transform. It is well known that $\widehat{H(h)}(k) = -i \text{sgn}(k)\widehat{h}(k)$ for any $h$ in Schwartz class, and by observing $|k| = \text{sgn}(k)k$ we may have
\begin{equation}
\Lambda(f) = \big(-2H(\partial_xf^1)-\partial_x f^2 + 2H(\partial_xf^2)
-\partial_x f^1,\quad -\partial_x f^1 \big)
\end{equation}
Hence, for any $\eta\in C^{0,1}(\mathbb{R})$, we may directly compute the quantity
$\Lambda(\eta f)-\eta \Lambda(f)$. Since $\partial_x(\eta f^1) - \eta\partial_xf^1 = f^1\partial_x \eta $, we only
study its first component
\begin{equation*}
\begin{aligned}
\big(\Lambda(\eta f)-\eta\Lambda(f)\big)^1
& = -2H(\partial_x(\eta f^1)) - \partial_x (\eta f^2) + 2H(\partial_x(\eta f^2)) -  \partial_x (\eta f^1) \\
& \quad + 2\eta H(\partial_x f^1) + \eta\partial_x f^2 - 2\eta H(\partial_xf^2) + \eta\partial_x f^1 \\
& = -2\big[H(\partial_x(\eta f^1))-\eta H(\partial_x f^1)\big]
+2\big[H(\partial_x(\eta f^2))-\eta H(\partial_x f^2)\big] - \partial_x\eta(f^1+f^2).
\end{aligned}
\end{equation*}
Thus from the estimate
\begin{equation}\label{f:2.14}
\|H(\partial_x(\eta f^i))-\eta H(\partial_x f^i)\|_{L^2(\mathbb{R})}
\leq C\|\eta\|_{C^{0,1}(\mathbb{R})}\|f^i\|_{L^2(\mathbb{R})}
\end{equation}
where $i=1,2$, we arrive at
\begin{equation}\label{f:2.15}
\begin{aligned}
\|\Lambda(\eta f)-\eta \Lambda(f)\|_{L^2(\mathbb{R})}
&\leq C\Big\{\sum_{i=1}^2\|H(\partial_x(\eta f^{i}))-\eta H(\partial_x f^{i})\|_{L^2(\mathbb{R})}+\|\nabla \eta \|_{L^\infty(\mathbb{R})}\|f\|_{L^2(\mathbb{R})}\Big\} \\
&\leq C\|\eta \|_{C^{0,1}(\mathbb{R})}\|f\|_{L^2(\mathbb{R})}.
\end{aligned}
\end{equation}
Our task now is to estimate $\eqref{f:2.14}$. Although the proof is probably known to experts in the
area, we provide it here for the sake of the completeness.
\begin{equation*}
H(\partial_x(\eta f^i))-\eta H(\partial_x f^i) = H(\partial_x\eta f^i)
+H\big((\eta-\eta(x))\partial_xf^i\big)
\end{equation*}
Note that
\begin{equation*}
\begin{aligned}
H\big((\eta-\eta(x))\partial_xf^i\big)(x)
&=\frac{1}{\pi}\lim_{\varepsilon\to0}\int_{|z|>\varepsilon}\frac{\eta(z)-\eta(x)}{x-z}\partial_z f^i(z) dz \\
&= -\frac{1}{\pi}\lim_{\varepsilon\to0}\int_{|z|>\varepsilon}\frac{\partial_z\eta(z)f^i(z)}{x-z}dz
+ \frac{1}{\pi}\lim_{\varepsilon\to0}\int_{|z|>\varepsilon}\frac{\eta(z)-\eta(x)}{(x-z)^2}f^i(z)dz,
\end{aligned}
\end{equation*}
and this together with
\begin{equation*}
\eta(z)-\eta(x) = \int_0^1 \partial_\xi \eta(\xi)dt\cdot(z-x), \qquad \xi = tz+(1-t)x,
\end{equation*}
implies the desired estimate $\eqref{f:2.14}$ (see \cite{JD}). Indeed, the estimate $\eqref{f:2.15}$ may hold for
any $1<p<\infty$, i.e.,
\begin{equation*}
\|\Lambda(\eta f)-\eta\Lambda(f)\|_{L^p(\mathbb{R})}
\leq C\|\eta\|_{C^{0,1}(\mathbb{R})}\|f\|_{L^p(\mathbb{R})}.
\end{equation*}

In the case of $d\geq 3$, the previous proof indicates that the Hilbert transform will be replaced by Riesz transforms, and it will provide another proof for Theorem $\ref{thm:1.1}$ in the special case of
$\mathbb{R}_+^d$. Furthermore, if the domain $\Omega$ is sufficient smooth, this approach may be applied to the following estimate
\begin{equation*}
\|\Lambda(\eta f)-\eta\Lambda(f)\|_{L^p(\partial\Omega)}
\leq C\|\eta\|_{C^{0,1}(\partial\Omega)}\|f\|_{L^p(\partial\Omega)},
\end{equation*}
and we will complete this topic through pseudodifferential operator arguments in a separate work.

\begin{center}
\textbf{Acknowledgements}
\end{center}

The first author wants to express his sincere appreciation to Professor Zhongwei Shen and Professor Peihao Zhao for their encouragements and instructions. The first author was supported by the National Natural Science Foundation of China (Grant No.11471147). The third author was supported by the National Natural Science
Foundation of China (Grant No.11571020).


\begin{thebibliography}{000}
\bibitem{JBAFME} J. Behrndt, A.F.M. ter Elst, Dirichlet-to-Neumann maps on bounded Lipschitz domains,
J. Differential Equations 259(2015), no.11, 5903-5926.

\bibitem{RS} R. Brown, Z. Shen, Estimates for the Stokes operator in Lipschitz domains, Indiana Univ. Math. J. 44(1995), no.4, 1183-1206.

\bibitem{CAP} A. Calder\'on, Algebra of singular integrals operators, Singular Integrals. Proc. Symp.
Pure Math. 10(1967), 18-55.

\bibitem{CAP1} A. Calder\'on, On an inverse boundary value problem, Seminar on Numerical Analysis and its Applications to Continuum Physics (Rio de Janeiro, 1980),  pp.65-73, Soc. Brasil. Mat., Rio de Janeiro, 1980.

\bibitem{BD1} B. Dahlberg, On the Poisson integral for Lipschitz and $C^1$ domains, Studia Math. 66(1979), 13-24.

\bibitem{BD2} B. Dahlberg, Weighted norm inequalities for the Lusin area integral and the nontangential maximal functions for functions harmonic in a Lipschitz domain, Studia Math. 67(1980), no.3, 297-314.

\bibitem{BD3} B. Dahlberg, On the absolute continuity of elliptic measures, Amer. J. Math., 108(1986),  no.5, 1119-1138.

\bibitem{BD4} B. Dahlberg, Poisson semigroups and singular integrals, Proc. Amer. Math. Soc.,  97(1986),  no.1, 41-48.

\bibitem{BDCK} B. Dahlberg, C. Kenig, Hardy spaces and the Neumann problem in $L^p$ for Laplace's equation in Lipschitz domains,  Ann. of Math., (2)125(1987),  no.3, 437-465.

\bibitem{BDCKJPGV} B. Dahlberg, C. Kenig, J. Pipher, G. Verchota, Area integral estimates for higher order elliptic equations and systems, Ann. Inst. Fourier (Grenoble) 47(1997), no.5, 1425-1461.


\bibitem{EBFCEKGCV} E.B. Fabes, C.E. Kenig, G. Verchota, The Dirichlet problem for the Stokes system on Lipschitz domains, Duke Math. J. 57(1988), no.3, 769-793.

\bibitem{JD} L. Grafakos, Modern Fourier analysis, Second ed., Graduate Texts in Mathematics 250, Springer, New York, 2009.

\bibitem{SH} S. Hofmann, Dahlberg's bilinear estimate for solutions of divergence form complex elliptic equations, Proc. Amer. Math. Soc.,  136(2008),  no.12, 4223-4233.

\bibitem{DJSK} D. Jerison, C. Kenig, An identity with applications to harmonic measure, Bull. Amer. Math. Soc. (N.S.),  2(1980), no.3, 447-451.

\bibitem{Shen1} C. Kenig, Z. Shen, Layer potential methods for elliptic homogenization problems, Comm. Pure Appl. Math., 64(2011), no.1, 1-44.

\bibitem{Shen2} C. Kenig, F. Lin, Z. Shen, Periodic homogenization of Green and Neumann functions, Comm. Pure Appl. Math.,  67(2014),  no.8, 1219-1262.

\bibitem{OAL} O.A. Ladyzhenskaya, The Mathematical Theory of Viscous Incompressible Flow,
Revised English edition, Gordon and Breach Science Publishers, New York-London, 1963.

\bibitem{MMMW} M. Mitrea, M. Wright, Boundary value problems for the Stokes system in arbitrary Lipschitz domains, Ast¨¦risque No.344(2012), viii+241 pp.ISBN: 978-2-85629-343-0.

\bibitem{Shen} Z. Shen, Commutator estimates for the Dirichlet-to-Neumann map in Lipschitz domains,  Some topics in harmonic analysis and applications, 369¨C384, Adv. Lect. Math. (ALM)34,
    Int. Press, Somerville, MA, 2016.

\bibitem{Shen3} Z. Shen, A note on the Dirichlet problem for the Stokes system in Lipschitz domains,  Proc. Amer. Math. Soc.,  123(1995),  no.3, 801-811.

\bibitem{V} G. Verchota, Layer potentials and regularity for the Dirichlet problem for Laplace's equation in Lipschitz domains, J. Funct. Anal.,  59(1984),  no.3, 572-611.

\end{thebibliography}
\end{document}